\newcounter{results}[section] % Uniform counters for lemmas, theorems, propositions etc
\newcounter{steps}[section] % Uniform counters for lemmas, theorems, propositions etc
\theoremstyle{plain}
\newtheorem{theorem}[results]{Theorem}
\newtheorem{remark}[results]{Remark}
\newtheorem{lemma}[results]{Lemma}
\newtheorem{corollary}[results]{Corollary}
\newtheorem{conj}[results]{Conjecture}
\newtheorem*{theorem*}{Theorem}
\newtheorem*{lemma*}{Lemma}
\newtheorem*{proposition*}{Proposition}
\newtheorem*{corollary*}{Corollary}
\newtheorem*{exercise*}{Exercise}
\newtheorem*{fact*}{Fact}
\newtheorem*{remark*}{Remark}
\newtheorem*{question*}{Question}
\theoremstyle{definition}
\newtheorem*{definition*}{Definition}
\newtheorem*{example*}{Example}
\numberwithin{equation}{section}
\newcommand{\R}{\mathbb{R}}
\newcommand{\Z}{\mathbb{Z}}
\newcommand{\Sp}{\mathbb{S}}
\newcommand{\mbb}{\mathbb}
\newcommand{\C}{\mbb{C}}
\newcommand{\lan}{\langle}
\newcommand{\la}{\lambda}
\newcommand{\ra}{\rangle}
\newcommand{\h}{\mathcal{H}}
\newcommand{\A}{\mathcal{A}}
\newcommand{\emb}{\hookrightarrow}
\newcommand{\In}{\subset}
\newcommand{\Om}{\Omega}
\newcommand{\om}{\omega}
\newcommand{\dl}{{\delta}}
\newcommand{\Dl}{{\Delta}}
\newcommand{\al}{{\alpha}}
\newcommand{\ed}{{\rm d}}
\newcommand{\id}{\,\,\ed}
\newcommand{\D}{{\nabla}}
\newcommand{\Db}{{\nabla^{\bot}}}
\newcommand{\eps}{{\varepsilon}}
\newcommand{\fr}[2]{\frac{#1}{#2}}
\newcommand{\sm}{{\setminus}}
\newcommand{\T}{{\rm T}}
\newcommand{\pl}[2]{{\frac{\partial #1}{\partial #2}}}
\newcommand{\zb}{\overline{z}}
\newcommand{\db}{\overline{\partial}}
\newcommand{\de}{\partial}
\newcommand{\dz}{{\de_z}}
\newcommand{\dzb}{{\de_{\zb}}}
\newcommand{\nn}{\nonumber}
\newcommand{\Si}{\Sigma}
\newcommand{\twopartdef}[4]
{
	\left\{
		\begin{array}{ll}
			#1 & \mbox{if  } #2 \bigskip \\
			#3 & \mbox{if  } #4
		\end{array}
	\right.
}
\newcommand{\threepartdef}[6]
{
	\left\{
		\begin{array}{lll}
			#1 & \mbox{if  } #2\bigskip \\
			#3 & \mbox{if  } #4 \bigskip \\
			#5 & \mbox{if  } #6
		\end{array}
	\right.
%}
}
\def\XXint#1#2#3{{\setbox0=\hbox{$#1{#2#3}{\int}$}
     \vcenter{\hbox{$#2#3$}}\kern-.5\wd0}}
\newcommand{\ov}{\overline}
\DeclareMathOperator{\tr}{\rm tr}
\DeclareMathOperator{\Rm}{Rm}
\DeclareMathOperator{\Area}{Area}
\newcommand{\vlinesub}[1]{\vline_{_{_{_{_{_{_{_{\,#1}}}}}}}}}
\newcommand{\abs}[1]{\vert#1\vert}
\newcommand{\calj}{\mathcal{J}}
\newcommand{\secondvar}[3]{\mathrm{d}^2{#1}\rbrak{#2}\sbrak{#3}}
\newcommand{\cbrak}[1]{\left\{{#1}\right\}}
\newcommand{\rbrak}[1]{\left({#1}\right)}
\newcommand{\sbrak}[1]{\left[{#1}\right]}
\newcommand{\inner}[1]{\left\langle #1 \right\rangle}
\newcommand{\into}{\hookrightarrow}
\newcommand{\indx}{\ensuremath{\mathrm{index}}}
\newcommand{\nullity}{\ensuremath{\mathrm{nullity}}}
\newcommand{\via}{\textit{via}}
\newcommand{\pd}{\partial}
\newcommand{\dform}{\mathrm{d}}
\newcommand{\sff}{\mathrm{II}}
\newcommand{\tensor}{\otimes}
\begin{document}
\title{Index estimates for constant mean curvature surfaces in three-manifolds by energy comparison}
\author{Luca Seemungal and Ben Sharp}
\date{\today}
\maketitle

\begin{abstract}
We prove a linear upper bound on the Morse index of closed constant mean curvature (CMC) surfaces in orientable three-manifolds in terms of genus, number of branch points and a Willmore-type energy.    
\end{abstract}

%-----------INTRODUCTION-------------------------------

\section{Introduction}

Given $m\geq 3$ and a domain $\Omega\subset\R^m$ the celebrated  Cwikel-Lieb-Rozenbljum inequality gives a bound on the number $n(\al)$ of eigenvalues of a Schr\"odinger operator $L=-\Delta-q$ (with Dirichlet boundary conditions) which are less than or equal to $\al\in\R$
\begin{equation}\label{eq:CRL}
n(\al)\leq C\int_\Om (q+\al)_{+}^{\fr{m}{2}} \quad \text{where $(q+\al)_+=\twopartdef{q+\al }{q+\al \geq 0}{0}{\text{otherwise.}}$}	
\end{equation}
P.~Li and S.-T.~Yau's elegant proof of this fact \cite{LY83} elucidates the role of the $W^{1,2}\hookrightarrow L^\frac{2m}{m-2}$ Sobolev embedding, so that the constant in the above estimate \eqref{eq:CRL} depends explicitly and only upon the associated Sobolev constant.

S.-Y.~Cheng and J.~Tysk \cite{CT94} proved that if $\Sigma^m\subset N^{n}$ is a minimal immersion (which we know admits a Michael-Simon Sobolev inequality) in a closed Riemannian manifold, and $L=-\nabla^2-Q$ is a shifted rough Laplacian on a Riemannian vector bundle $\mathcal{V}$ over $\Sigma$ (here, $Q\in\textrm{End}(\mathcal{V})$ is some symmetric endomorphism), then for $\Om\In \Sigma$ with zero Dirichlet boundary conditions
$$n(0)=\textrm{index}(L) + \textrm{nullity}(L)\leq C(m,N)(\textrm{rank}(\mathcal{V}))\int_\Om (\max \{1, q\})^{\fr{m}{2}} \id V_\Sigma,$$
where $q(p)$ is the largest eigenvalue of $Q$ at $p\in \Sigma$.
In particular, and importantly, if $\mathcal{V}$ is the normal bundle of $\Sigma$, and
$ QX=\sum_{i,j}\inner{A(E_i,E_j),X}A(E_i,E_j) + \sum_i\Rm^N(X,E_i)E_i $
where $E_i$ is an orthonormal frame of $\Sigma$ and $A$ is its second fundamental form, then $L$ is the Jacobi operator of the area functional $\A^m$
$$\ed^2\A^m(\Sigma)[X,X] = \int_\Sigma \lan LX, X \ra \id V_\Sigma$$ so that the above provides interesting spectral estimates on minimal hypersurfaces in terms of $q(p)^{m/2}\leq K_m(\abs{A}^m + \abs{\Rm^N}^{m/2})$.
Assuming a uniform curvature bound on $N$ (for example, if $N$ is closed), one has that the index plus nullity (=$n(0)$) of a minimal surface is bounded by its area and total curvature $\int_\Sigma\abs{A}^m$. 
\textit{A fortiori}, this $L$ is also the Jacobi operator of area when restricted to volume preserving deformations along constant mean curvature hypersurfaces $\Sigma^m \In N^{m+1}$, and so similar estimates hold for CMC hypersurfaces, at least when $m\geq 3$. See below for precise definitions and clarifications of the notion of CMC index that we use in this paper.  

\subsection{The case $m=2$}
If the inequality \eqref{eq:CRL} were true for $m=2$ then the Gauss equations $|A|^2=2\kappa^N(\T\Sigma)-2 K^\Sigma -\tfrac12 |H|^2$ and the Gauss-Bonnet formula would allow one to prove that the index of a minimal or CMC surface is bounded affine-linearly from above by the genus and area of the domain, up to a uniform constant depending only on the ambient manifold $N$ and the size of the mean curvature $|H|$.

An estimate like \eqref{eq:CRL} is false when $m=2$ for general Schr\"odinger operators as standard counter-examples show. One cannot expect such an inequality to hold for general $q$ when the right-hand side is the $L^1$ norm of $q$. The correct replacement space is that $q$ be in the Zygmund class $L\log L$ see e.g. \cite{Solo94}. When $q\sim |A|^2 + Rm^N$ one cannot use Gauss-Bonnet to relate the $L\log L$-norm of $q$ to the genus of $\Sigma$. 
Nevertheless, for branched minimal immersions, Norio Ejiri and Mario Micallef \cite{EM08} used the relationship between Dirichlet energy and the area functional along immersions in order to prove that the analytically-expected result does indeed hold inside arbitrary closed ambient manifolds. 

Given a Riemann surface $\Sigma$, let $E(u) = \fr12\int_\Sigma |u_x|^2 + |u_y|^2 \id x\ed y$ and $\A (u) = \int_\Sigma |u_x\wedge u_y|\id x\ed y$ be the Dirichlet energy and area functionals respectively, defined here using isothermal coordinates. It is well known and easy to see that $\A(u)\leq E(u)$ with equality if and only if $u$ is conformal. Equally well known is that they agree up to first order too when $u$ is conformal i.e. $\A(u)=E(u)$ and $\ed \A(u)[v] = \ed E(u)[v]$ for any $v\in\Gamma(u^\ast \T N)$. In particular conformal harmonic maps are the same as a branched minimal immersions. One of the key results of \cite{EM08} is the comparison of second variations \emph{at a branched minimal immersion $u$},
\begin{equation}\label{eq:confrough}
	\dform^2{\A}(u)[v,v] = \dform^2{E}(u)[v,v] - 4\int_\Si\abs{\mu}^2\id\Si,
\end{equation}  
which holds for any variation $v\in \Gamma(u^*\T N)$, and where $\mu$ is a quantity depending on $v$ that measures the ``infinitesimal conformal deficit'' of the variation $v$. This $\mu$ is zero for instance along a variation which does not infinitesimally move the induced conformal structure of the domain, see \eqref{eq:infcon} and the surrounding discussion.  

As a corollary of the above comparison of the second variations of area and Dirichlet energy, Ejiri-Micallef prove the following comparison between the index $i_\A$ of the area functional and the index $i_E$ of the energy functional at a conformal harmonic map $u$. 

\begin{theorem}[Ejiri-Micallef: Theorem 1.1 in \cite{EM08}]\label{thm:ejiri-micallef}
Let $u:\Sigma_g\to N$ be a (possibly branched) minimal immersion of a closed Riemann surface of genus $g$ into a Riemannian manifold $N^n$. Then  
$$i_E\leq i_{\A} \leq i_E + r, \,\,\,\,\, \text{where} \,\,\,\,\, r=\threepartdef{6g-6-2b}{b\leq 2g -3}{4g-2+2\left[\frac{-b}{2}\right]}{2g-2\leq b \leq 4g - 4}{0}{b\geq 4g-3,}$$
and $b$ is the total number of branch points of $u$ counted with multiplicity, and $[x]=\max_{\Z}\{k: k\leq x\}$. 
\end{theorem}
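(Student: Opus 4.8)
The plan is to follow Ejiri and Micallef: the only two ingredients are the second-variation identity \eqref{eq:confrough} and the fact that the conformal deficit $\mu$ is pinned to a finite-dimensional space whose dimension is computed by Riemann--Roch. The lower bound $i_E\leq i_\A$ is immediate from \eqref{eq:confrough}: since $\int_\Si|\mu|^2\geq 0$ we have $\dform^2\A(u)[v,v]\leq\dform^2 E(u)[v,v]$ for every $v\in\Gamma(u^*\T N)$, so any subspace on which $\dform^2E(u)$ is negative definite is also one on which $\dform^2\A(u)$ is negative definite; choosing one of dimension $i_E$ gives $i_\A\geq i_E$.

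For the upper bound I would first reduce to a dimension count. The structural point (from \eqref{eq:infcon} and the surrounding discussion) is that $v\mapsto\mu(v)$ is linear and $\mu$ is the holomorphic (equivalently harmonic) representative of the infinitesimal variation of the conformal structure of $\Si=\Si_g$ induced by $v$; a priori $\mu$ is therefore a holomorphic quadratic differential on $\Si$. Because the pullback metric $u^*g_N$ degenerates at each branch point precisely to the order of the branching, the admissible $\mu$ must vanish there to that order, so $\mu\in\mathcal{W}:=\overline{H^0(\Si,K_\Si^{\,2}(-B))}$, where $B=\sum_j(\mathrm{ord}_j u)\,p_j$ is the branch divisor, $\deg B=b$. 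Writing $q:=\dim_{\C}H^0(\Si,K_\Si^{\,2}(-B))$, so $\dim_\R\mathcal{W}=2q$, let $P$ achieve $i_\A$, i.e.\ $\dim_\R P=i_\A$ and $\dform^2\A(u)|_P<0$. The linear map $v\mapsto\mu(v)$ has image of real dimension $\leq 2q$ on $P$, so $P_0:=P\cap\{\mu=0\}$ has $\dim_\R P_0\geq i_\A-2q$; on $P_0$ the deficit vanishes, so \eqref{eq:confrough} gives $\dform^2E(u)=\dform^2\A(u)<0$ there, whence $i_E\geq\dim_\R P_0\geq i_\A-2q$, that is $i_\A\leq i_E+2q$.

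It then remains to prove $2q\leq r$, which is classical Riemann surface theory depending only on $g$ and $b$. Here $\deg K_\Si^{\,2}(-B)=4g-4-b$. If $b\geq 4g-3$ this is negative, so $q=0$. If $b\leq 2g-3$ then $\deg K_\Si^{-1}(B)=b-2g+2<0$, so by Serre duality $h^1(K_\Si^{\,2}(-B))=h^0(K_\Si^{-1}(B))=0$, and Riemann--Roch gives $q=(4g-4-b)-g+1=3g-3-b$, i.e.\ $2q=6g-6-2b=r$. Finally, for $2g-2\leq b\leq 4g-4$: if $q=0$ there is nothing to prove; otherwise either $h^1(K_\Si^{\,2}(-B))=0$, and Riemann--Roch gives $q=3g-3-b$, or $h^1>0$, and Clifford's theorem gives $q\leq 1+\tfrac12(4g-4-b)=2g-1-\tfrac b2$. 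In either case $q\leq 2g-1-\lceil b/2\rceil$, hence $2q\leq 4g-2-2\lceil b/2\rceil=4g-2+2\big[\tfrac{-b}{2}\big]=r$, using $[-b/2]=-\lceil b/2\rceil$. Combining the three regimes with the reduction gives $i_\A\leq i_E+r$. (That $r$ is best possible would follow by exhibiting a hyperelliptic $\Si$ with $B$ an appropriate effective divisor supported on the $g^1_2$, realizing equality in Clifford's theorem; this is not needed for the stated inequality.)

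I expect the main obstacle to be the middle step --- identifying the target space $\mathcal{W}$ exactly, and in particular justifying the precise order of vanishing of $\mu$ at the branch points. This is where all the delicacy sits: too weak a vanishing statement and the dimension count no longer yields the stated $r$, while the na\"ive guess that $\mu$ vanishes to twice the branching order (because $\partial u$ does) would give a strictly smaller, wrong, value of $r$. Granting the exact forms of \eqref{eq:confrough} and \eqref{eq:infcon}, the rest --- the linear algebra of the reduction and the Riemann--Roch/Clifford bookkeeping --- is routine, and its three regimes are exactly the source of the piecewise definition of $r$.
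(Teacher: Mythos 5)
Your overall strategy is the one the paper (following Ejiri--Micallef) actually uses: the lower bound from the pointwise inequality $\dform^2\A\leq\dform^2E$, and the upper bound by showing that the failure of $\dform^2\A$ and $\dform^2E$ to agree is governed by a finite-dimensional obstruction space whose dimension is computed by Riemann--Roch. Your endgame is also correct: the degree of $K_\Sigma^{2}(-B)$ is $4g-4-b$, Serre duality kills $h^1$ when $b\leq 2g-3$ giving $2q=6g-6-2b$, Clifford's theorem handles the special range, and negativity of the degree gives $q=0$ for $b\geq 4g-3$; this reproduces the three regimes of $r$ exactly.

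The genuine gap is in the middle reduction, precisely where you said the delicacy sits. For a general variation $v=\sigma+s$ the quantity $\mu(v)=\eta\otimes\dform z$ of \eqref{eq:infcon} is an arbitrary smooth section of $\xi^{0,1}\otimes(\T^\ast\Sigma)^{1,0}$; it is \emph{not} a priori holomorphic, so it does not take values in the $2q$-dimensional space $\mathcal{W}$, and the rank--nullity step $\dim(P\cap\{\mu=0\})\geq i_\A-2q$ fails as written (the target is infinite-dimensional, so $P\cap\ker\mu$ could even be trivial). The finite-dimensionality enters differently: one fixes the normal part $s$ and asks whether a tangential part $\sigma$ can be chosen so that $\mu=0$; this is a $\db$-type equation for $\sigma^{0,1}$, and by the Fredholm alternative it is solvable if and only if the inhomogeneous term $2e^{-2\la}\lan s,A(u_z,u_z)\ra u_{\zb}\otimes\dform z$ is $L^2$-orthogonal to $\ker D^\ast$, where $D^\ast=i\ast\db$ on $\Gamma(\xi^{0,1}\otimes(\T^\ast\Sigma)^{1,0})$. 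It is $\ker D^\ast$ (equivalently the cokernel of $\db$), not the image of $v\mapsto\mu(v)$, that is the finite-dimensional space identified via Riemann--Roch; your rank--nullity argument should therefore be run on the composition of $s\mapsto\mu$ with the $L^2$-projection onto $\ker D^\ast$. This is exactly the paper's splitting $V=V_1\oplus V_2$ in the proof of \cref{thm:bound-index-cmc-energy}. Two further small ingredients are needed to close the loop and are implicit in your write-up: since $u$ is minimal, $\dform^2\A(u)[v,v]$ depends only on $s=v^\perp$, so replacing $\sigma$ by the solving $\sigma'$ does not change the area Hessian (and in particular $v\mapsto v^\perp$ is injective on any negative subspace, so no dimension is lost). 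With the projection to $\ker D^\ast$ inserted, your argument becomes the standard one and the Riemann--Roch/Clifford bookkeeping you supplied finishes the proof.
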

In particular, even though there are no suitable analytical estimates on the index of $\ed^2 \A$ from above, the second variation of $E$ along a conformal harmonic map $u$ is of the form $-\D^2 - Q$ where $Q$ is uniformly bounded in $L^\infty$ by the ambient curvatures of $N$ and one may proceed to bound the $E$-index of $u$ by the area of its image, at least when one restricts to a conformal harmonic map. The above theorem thus provides an affine-linear upper bound on the area-index of a minimal immersion by the genus and area of the image.

The above is underpinned by the following fundamental heuristic from Ejiri-Micallef: Given a branched minimal immersion, we have $\A = E$ and in general $\A \leq E$. Any variation which decreases $E$ must therefore also decrease $\A$. On the other hand if a variation of $u$ continues to be a conformal map relative to the fixed Riemann surface $\Sigma$, the two functionals $\A$ and $E$ coincide along the variation, so the only possibility for a variation to decrease $\A$ but not necessarily $E$ is if the induced metrics along the variation are changing the conformal class. These variations infinitesimally form a vector space which Ejiri-Micallef show has dimension $r$, which is as expected the dimension of the moduli space of the underlying surface.  

\paragraph{Terminology}	For a constant $h$, we call $u:\Sigma \to N^3$ a \emph{branched CMC $h$-immersion} (or CMC $h$-immersion for short) if $u:\Sigma \to N^3$ is some branched conformal immersion from a closed Riemann surface into some oriented Riemannian three-manifold $N$ parametrising a CMC surface whose mean curvature vector is $H=h\nu$ where, in isothermal coordinates $(u_x,u_y,\nu)$ is positively oriented in $N$. We let $g$ be the genus of $\Sigma$ and $b$ the total number of branch points counted with multiplicity. 
	
\

 In particular a CMC $h$-immersion $u:\Sigma \to N$ is a critical point of the following functional 
$$\A_h(u) = \A(u) + V_h(u)$$
and $V_h$ is an enclosed volume with oriented weight $h$. There is, in general, no canonical form for an enclosed volume in $N^3$, but it is easy to see that there is a good choice if $u(\Sigma)$ misses a small ball in $N$: for any $\eps < inj_N$ and $B_\eps(x)\In N$ if we set
\begin{equation}\label{eq:alH}
\al_h = h\ast \ed f \in \Gamma(\wedge^2 \T^\ast (N\sm B_\eps(x)) \,\, \text{where} \,\,  f\in C_0^\infty(N\sm B_\eps(x)) \,\, \text{solves} \,\,  \Dl f = 1 	
\end{equation}
then $\ed \al_h = h\ed V_N$. 
Thus given any $u\in W^{1,2}(\Sigma,N\sm B_\eps(x))$ we may define
$V_h(u) : = \int_\Sigma u^\ast \al_h.$
In particular if $u$ is a CMC $h$-immersion then it will certainly avoid some such $B_\eps(x)\In N$ and we may consider it to be a critical point of $\A_h$ above (see Appendix \ref{app:full-sec-var-area}).
When $N=\R^3$ we have a globally-defined $\al_h = \tfrac{h}{3}(x_1 \ed x_2 \wedge \ed x_3 - x_2 \ed x_1 \wedge \ed x_3 + x_3 \ed x_1\wedge \ed x_2)$.  

We could equally well consider a branched CMC $h$-immersion to be a conformal critical point of 
$$E_h(u) = E(u) + V_h(u)$$ 
and in either case, given a CMC $h$-immersion we will implicitly assume that it is a critical point of both $\A_h$ and $E_h$ for an appropriate $V_h$. 

As to be expected (see \cite{bc3} for the Euclidean derivation and \eqref{eqn:second-var-area-h} in Appendix \ref{app:full-sec-var-area}) along a CMC $h$-immersion the second variation of $\A_h$ only sees the normal part of the variation and in particular if $s= f \nu = v^\bot$, $f=\lan v, \nu \ra$ we have  
\begin{equation}\label{eq:normvar}
\ed^2 \A_h(u)[v,v] = \ed^2 \A_h(u)[s,s] =  \ed^2 \A_h(u)[f,f]=\int_\Sigma f L f \id \Sigma	
\end{equation}
where $L=-\Dl - (|A|^2 + Ric_N(\nu,\nu))$. The `weak' CMC-index $i_h$, is equivalent to the number of ways we can reduce the area $\A(u)$ \emph{amongst volume preserving deformations}. This quantity is equivalent to  
$$i_h(u) :=\max_{V\In C^\infty(\Sigma)}\{\dim V : \ed^2\A_h(u)[f,f] < 0 \,\,\,\text{and} \,\,\int_\Sigma f\id \Sigma =0 \,\,\, \forall f\in V\},$$
where $V$ is a linear subspace. In this paper we will estimate 
$$i_{\A_h} : = \max_{V\In C^\infty(\Sigma)}\{\dim V : \ed^2\A_h(u)[f,f] < 0  \,\,\, \forall f\in V\} = \textrm{index}(L) \quad \text{and} \quad n_{\A_h} = \textrm{nullity}(L).$$
We recall that the index $i_{\A_h}$ of the operator $L$ differs from $i_h$ by at most $1$ (see e.g. \cite{bourni-sharp-tinaglia-2022}).  

One of the main auxiliary results of this paper, crucial to obtaining an index estimate for immersed CMC surfaces in arbitrary three-manifolds, is that the second-order comparison of area and Dirichlet energy remains true along conformal maps which do not necessarily parametrise minimal surfaces. In particular \eqref{eq:confrough} holds along \emph{all} conformal maps into arbitrary Riemannian manifolds $N^n$ (i.e. there is no constraint required on the mean curvature), see Section \ref{sec:comparison-sec-var-area-energy}. We therefore have exactly a comparison for the energies $\A_h$ and $E_h$. From this point we are able to utilise the approach of Ejiri-Micallef in order to show:     

\begin{theorem}\label{thm:main-theorem}
Suppose that $u:\Sigma_g^2\to  N^3\emb \R^d$ is a branched CMC $h$-immersion (see terminology above) from a closed Riemann surface of genus $g$ to an orientable $3$-manifold $N$, itself isometrically embedded in $\R^d$. Then there exists some uniform $C<\infty$ so that 
$$ i_{\A_h} + n_{\A_h} \leq C(4J^2+h^2)\Area(\Sigma) + r $$
where $J$ is the sup-norm of the largest eigenvalue of the second fundamental form $\sff$ of the embedding $N\into\R^d$ and $r$ is a purely topological quantity, exactly as in Theorem \ref{thm:ejiri-micallef} above: if $b$ is the total number of branch points counted with multiplicity then 
$$r=\threepartdef{6g-6-2b}{b\leq 2g -3}{4g-2+2\left[\frac{-b}{2}\right]}{2g-2\leq b \leq 4g - 4\qquad [x]=\max_{\Z}\{k: k\leq x\} }{0}{b\geq 4g-3.}$$
If in addition $u$ is totally umbilic then
$$ i_{\A_h} + n_{\A_h} \leq C(4J^2+h^2)\Area(\Sigma). $$
\end{theorem}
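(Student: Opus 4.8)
The plan is to run the Ejiri--Micallef scheme for the weighted functionals $\A_h$ and $E_h$ in three moves: a second‑order comparison, an estimate of the energy index, and a finite‑dimensional correction from the moduli of the domain. Since $u$ is a conformal critical point of both $\A_h$ and $E_h$ and the volume term $V_h$ is common to the two, the comparison \eqref{eq:confrough} established in Section~\ref{sec:comparison-sec-var-area-energy} along arbitrary conformal maps gives, for every $v\in\Gamma(u^*\T N)$,
\[
\dform^2\A_h(u)[v,v]=\dform^2 E_h(u)[v,v]-4\int_\Sigma|\mu|^2\id\Sigma ,
\]
where $\mu=\mu(v)$ is the infinitesimal conformal deficit of $v$; in particular $\dform^2\A_h(u)\le\dform^2 E_h(u)$, with equality whenever $\mu(v)=0$.

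Next I would bound $i_{E_h}+n_{E_h}$, the number of non‑positive eigenvalues of the $E_h$‑Jacobi operator on the rank‑$3$ bundle $u^*\T N$, by computing the second variation in the pulled‑back metric $g:=u^*g_N$ on $\Sigma$ — a metric that degenerates to a cone metric at the branch points. Because the Dirichlet energy is conformally invariant in dimension two, the gradient term $\int|\D v|^2$ of $\dform^2 E_h$ is unchanged when read in $g$; with respect to $g$ the map $du$ is a pointwise isometry onto $\T u(\Sigma)$ off the branch points, so the zeroth‑order part of $\dform^2 E$ is the curvature endomorphism $\tr_g R^N(v,du)du$, which the Gauss equation for $N\into\R^d$ bounds pointwise by $c\,J^2|v|^2$. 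The contribution of $V_h$ I would read from the appendix (cf.~\eqref{eqn:second-var-area-h}): along a normal variation $v=f\nu$ it equals $-h^2\int_\Sigma f^2$, the remaining tangential and cross terms being absorbed into $c_0\int|\D v|^2_g$ plus $Ch^2\int|v|^2$. Hence
\[
\dform^2 E_h(u)[v,v]\ \ge\ c_0\int_\Sigma|\D v|^2_g\id V_g-C(4J^2+h^2)\int_\Sigma|v|^2\id V_g ,
\]
and by min--max $i_{E_h}+n_{E_h}$ is at most the number of eigenvalues of the connection Laplacian of $(u^*\T N,g)$ not exceeding $C(4J^2+h^2)$. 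Reducing to the scalar Laplacian at the cost of the factor $\operatorname{rank}(u^*\T N)=3$, and invoking the Michael--Simon inequality on the branched surface $u(\Sigma)\subset\R^d$ (whose mean curvature is $\le|h|+cJ$) together with the ensuing Cheng--Tysk‑type eigenvalue bound, one gets $i_{E_h}+n_{E_h}\le C(4J^2+h^2)\Area(\Sigma)$, the additive constant being absorbed (e.g.\ through the monotonicity lower bound for $\Area(\Sigma)$).

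For the correction term $r$: let $W\In C^\infty(\Sigma)$ be any subspace with $\dform^2\A_h(u)[f,f]\le 0$. By \eqref{eq:normvar} the form $\dform^2\A_h(u)$ sees only the normal part of a variation, so for $f\in W$ and any vector field $\xi$ on $\Sigma$ we still have $\dform^2\A_h(u)[f\nu+du(\xi),f\nu+du(\xi)]\le 0$, whence $\dform^2 E_h(u)[f\nu+du(\xi),f\nu+du(\xi)]\le 4\int_\Sigma|\mu(f\nu+du(\xi))|^2$. Exactly as in the proof of Ejiri--Micallef's \cref{thm:ejiri-micallef}, adding the tangential (reparametrisation) variation $du(\xi)$ alters $\mu$ by a multiple of $\db\xi$, so a linear choice $\xi=\xi(f)$ removes the $\db$‑exact part and leaves $\mu(f\nu+du(\xi(f)))$ in the space $\mc{Q}$ of harmonic conformal deficits — the holomorphic quadratic differentials on $\Sigma_g$ with poles prescribed by the branch divisor of $u$ — whose dimension, by Riemann--Roch, equals $r$. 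On the kernel of $f\mapsto\mu(f\nu+du(\xi(f)))$, of codimension $\le r$ in $W$, the modified (and pairwise distinct) variations have $\mu=0$, so $\dform^2 E_h(u)\le 0$ there; thus $\dim W-r\le i_{E_h}+n_{E_h}$. Combining with the previous step and maximising over $W$ yields $i_{\A_h}+n_{\A_h}\le C(4J^2+h^2)\Area(\Sigma)+r$.

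The genuinely new ingredient — the comparison of second variations along \emph{non‑minimal} conformal maps — is what makes the scheme work in the CMC setting and is carried out separately in Section~\ref{sec:comparison-sec-var-area-energy}. Within the present argument the main obstacles are: (i) the precise second variation of $E_h$ — one must pin down the $V_h$‑contribution carefully enough that the resulting zeroth‑order operator is genuinely $L^\infty$ with norm controlled by $4J^2+h^2$; and (ii) verifying that the Ejiri--Micallef branch‑point bookkeeping — the cone metric $g$, the vanishing $W^{1,2}$‑capacity of the punctures, and the Riemann--Roch count producing exactly the three regimes of $r$ — transfers verbatim to the CMC case.
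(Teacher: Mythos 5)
Your proposal follows essentially the same route as the paper: the $\A_h$--$E_h$ second-variation comparison at conformal maps (with $V_h$ common to both functionals), a Riemann--Roch count of the conformal-deficit obstruction space producing the correction $r$ exactly as in the paper's proof of \cref{thm:bound-index-cmc-energy}, and a Cheng--Tysk/Michael--Simon heat-kernel argument with Urakawa's rank-$3$ reduction to the scalar Laplacian for $i_{E_h}+n_{E_h}$, with the first-order $V_h$-terms absorbed by Peter--Paul precisely as in the proof of \cref{thm:energy-bound}. The only slip is cosmetic: the obstruction space consists of quadratic differentials \emph{vanishing} on the branch divisor (which is why $r$ decreases with $b$), not ones with poles there.
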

\begin{remark}
$C$ can be estimated explicitly by $\tfrac{60}{\pi}$, but it is most likely far from optimal as demonstrated by considering round spheres in $\R^3$ or $\Sp^3$. In the presence of non-negative ambient (sectional) curvatures the \emph{order} of the estimate (linear in terms of Willmore energy and genus) is certainly optimal --- see discussion and examples below.

In the case that $h=0$ this of course reduces to estimates obtained by Ejiri-Micallef e.g. \cite[Theorem 4.3]{EM08}, where here we show the specific dependence on the embedding of $N$ in $\R^d$.  	
\end{remark}

Recall the unduloid and nodoid CMC surfaces in $\R^3$ which lie in the same one-parameter family of rotationally symmetric and periodic immersions known collectively as Delaunay surfaces. To get from the (embedded) unduloids to the (non-embedded) nodoids, the family passes through a singular configuration of an infinite family of spheres of the same radius with centres along the $x$-axis (the axis of rotation), touching tangentially. We thus refer to each sphere-like portion of a Delaunay surface as a bulge or lobe and the connecting region as a neck in the below. Close to the singular limit we note that the Willmore energy of a (given region of a) Delaunay surface is roughly equivalent to the number of bulges in that region.

In a three-torus $T^3=\frac{[0,1]^3}{\sim}$, one can produce a singly-lobed Delaunay torus $u_1:T^2\to T^3$ by projecting (the fundamental piece of) an appropriately scaled Delaunay surface in $\R^3$ onto $T^3$.
For Delaunay tori with more lobes, one proceeds in the obvious way by
taking $k$ lengths of the fundamental piece of the Delaunay surface and scale by $1/k$, then immerse into the fundamental piece of $T^3$ so that under the projection $\pi:\R^3\to T^3$, one has a $k$-lobed Delaunay torus (with $k$ lobes and $k$ necks). The $x$-component of the unit normal to these surfaces is a Jacobi field, so the Courant nodal domain theorem tells us that the index of these surfaces asymptotically grows at least affine-linearly with twice the number of lobes, $\indx(u_k)\geq 2k-2$. We also have $\Area(u_k)=k^{-1}\Area(u_1)$ (the area of the fundamental piece, of which we have $k$ copies, scales by $k^{-2}$), and the mean curvature $h_k=kh_1$.

In this way, for large $k$, $\indx(u_k)$ and Willmore energy $h_k^2\Area(u_k)$ grow affine-linearly in $k$. In particular the fact that the Willmore energy appears linearly as an upper bound in our estimate is optimal for large index.

This optimality is also shown by considering families of Wente tori (resp. rotationally symmetric CMC tori) in $\R^3$ (resp. $\Sp^3$). Rossman \cite{rossman-2001-wente-tori} and Rossman-Sultana \cite{rossman-sultana-2007} have proven that the index of these CMC tori grows affine linearly with twice the number of ``bulges''. In both cases these surfaces look like a piece of a Delaunay surface, bent around and with ends glued together. The terms bulges and necks are similarly interpreted as above in these papers (indeed this is why we use it here). See the descriptions in \cite{rossman-2001-wente-tori,rossman-sultana-2007} and the references therein for more precise definitions. In both cases one may easily infer that the number of bulges is roughly equivalent to the Willmore energy of the surface, which in the case of $\Sp^3$ takes the form $(4+ h^2){\rm Area}(\Sigma)$. 

There are several interesting general lower-bounds on the index of a CMC-surface in Riemannian $3$-manifolds in terms of genus see e.g. \cite{NH19, cavalcante-oliveira-2020} most of which build on the works of Ros \cite{ros9}, Savo \cite{Savo10} and more recently \cite{acs_18}. For instance in a space form it is shown \cite{cavalcante-oliveira-2020} that the weak CMC index grows at least as fast as $\frac{g}{3+c}$ where $c$ is the curvature of the ambient space. Given the examples of arbitrary genus constructed by Kapouleas \cite{kap2} in Euclidean space, and Heller-Heller-Traizet \cite{heller-heller-traizet-2022} in $\Sp^3$ there are certainly CMC surfaces whose index diverges at least linearly with genus and thus the order of the estimate is optimal in terms of genus. 

Given the above discussion we pose the following conjecture, which should be compared with the conjecture of Schoen and Marques-Neves (see \cite{acs_18}) 
\begin{conj}
Let $u:\Sigma^2\to  N^3$ be a branched CMC $h$-immersion and suppose $N^3$ has strictly positive sectional curvature. Then there exists some $C=C(N)<\infty$ so that 
$$C \left((1+h^2)\Area(\Sigma) + g \right) \leq   i_{\A_h} + n_{\A_h}.$$
In the case that $N=\R^3$ or $N=T^3$ is a flat three torus, we expect 
$$C \left(h^2\Area(\Sigma) + g \right) \leq   i_{\A_h} + n_{\A_h}.$$
\end{conj}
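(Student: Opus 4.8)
Since $i_{\A_h}+n_{\A_h}$ equals the dimension of a maximal subspace of $C^\infty(\Sigma)$ on which the form $\ed^2\A_h(u)[f,f]=\int_\Sigma |\D f|^2 - q\,f^2\id\Sigma$ is non-positive, where $q=|A|^2+\mathrm{Ric}_N(\nu,\nu)$, it suffices to exhibit a subspace $V\In C^\infty(\Sigma)$ of dimension $\gtrsim (1+h^2)\Area(\Sigma)+g$ on which this form is strictly negative; note that we may ignore the mean-value constraint defining the weak index $i_h$, since we bound the unconstrained quantity directly. The plan is to write $V=V_{\mathrm{bulk}}+V_{\mathrm{top}}$, where $V_{\mathrm{bulk}}$ is a family of localised test functions accounting for the analytic term $(1+h^2)\Area(\Sigma)$ and $V_{\mathrm{top}}$ a family of global test functions accounting for the topological term $g$.

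For $V_{\mathrm{bulk}}$, I would first use the pointwise lower bound on the potential. Cauchy--Schwarz on the principal curvatures gives $|A|^2\geq \tfrac12 H^2\gtrsim h^2$, while in a $3$-manifold $\mathrm{Ric}_N(\nu,\nu)$ is a sum of two ambient sectional curvatures and hence $\geq 2\kappa_0$, where $\kappa_0$ denotes the infimum of the sectional curvatures of $N$ (with $\kappa_0>0$ under the positivity hypothesis, $\kappa_0=0$ in the flat cases $\R^3,T^3$). Thus $q\geq c(1+h^2)$ in the positive case and $q\geq c\,h^2$ in the flat case, so that $\ed^2\A_h[f,f]\leq \int_\Sigma|\D f|^2 - c(1+h^2)f^2\id\Sigma$. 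The idea is then the standard packing argument: fix the scale $\rho\sim (1+h^2)^{-1/2}$ at which the bottom Dirichlet eigenvalue of an intrinsic $\rho$-ball falls below $c(1+h^2)$, choose a maximal disjoint family of intrinsic balls $\{B_\rho(p_i)\}_{i=1}^M$, and take the associated lowest Dirichlet eigenfunctions $f_i$ as test functions. Each satisfies $\ed^2\A_h[f_i,f_i]<0$ and they have pairwise disjoint support, so $\dim V_{\mathrm{bulk}}\geq M$; a covering of $\Sigma$ by the doubled balls $B_{2\rho}(p_i)$ then yields $M\gtrsim \Area(\Sigma)/\rho^2\sim (1+h^2)\Area(\Sigma)$ (resp. $h^2\Area(\Sigma)$), provided one controls the area of small intrinsic balls from above.

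For $V_{\mathrm{top}}$, I would follow the harmonic $1$-form method of Ros \cite{ros9} and Savo \cite{Savo10} (as carried out for CMC surfaces in space forms in \cite{cavalcante-oliveira-2018}): the $2g$-dimensional space of harmonic $1$-forms on $\Sigma$, paired against the immersion and its Gauss data, produces a negative subspace of dimension $\gtrsim g$; for the explicit Delaunay, Wente and Kapouleas families discussed above this mechanism is visible directly through the Jacobi fields $\langle e,\nu\rangle$ generated by ambient translations or Killing fields together with the Courant nodal-domain theorem, which also confirms the optimality of the estimate. One must then show that $V_{\mathrm{bulk}}$ and $V_{\mathrm{top}}$ together span a subspace of dimension $\gtrsim (1+h^2)\Area(\Sigma)+g$ on which $\ed^2\A_h$ remains strictly negative, controlling the cross-terms between the localised bulk functions and the spread-out topological ones.

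The principal obstacle is the local-geometric control required for the bulk count. For an arbitrary branched CMC $h$-immersion the induced metric degenerates at branch points and curvature may concentrate along thin necks, where the intrinsic Gauss curvature becomes very negative, so that neither the upper area bound $\Area(B_{2\rho}(p_i))\lesssim\rho^2$ nor the expected behaviour $\lambda_1(B_\rho)\sim\rho^{-2}$ of the Dirichlet eigenvalue is available a priori; it is precisely this concentration phenomenon that the Delaunay and Wente families are built to probe. Establishing such control (or an effective substitute giving a lower bound on the packing number at scale $\rho$), and simultaneously certifying that the topological directions remain genuinely independent of the bulk ones, is where the essential work lies. Finally, the strict positivity of the ambient curvature is what upgrades the mean-curvature bound $h^2\Area(\Sigma)$ to the full $(1+h^2)\Area(\Sigma)$ and is genuinely needed, consistent with the conjecture asserting only the $h^2\Area(\Sigma)$ coefficient in the flat cases $\R^3$ and $T^3$.
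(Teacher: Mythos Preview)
The statement you are attempting to prove is stated in the paper as a \emph{conjecture}, not a theorem; the paper provides no proof whatsoever. The surrounding discussion motivates the conjecture through examples (Delaunay tori in $T^3$, Wente tori in $\R^3$, rotationally symmetric CMC tori in $\Sp^3$, and the Kapouleas and Heller--Heller--Traizet constructions) which demonstrate that the index grows at least linearly with Willmore energy and with genus in those specific families, and through the known lower bounds of \cite{cavalcante-oliveira-2018} in space forms. But no general argument is offered, and the authors explicitly pose it as open.

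Your proposal is therefore not comparable to anything in the paper. As a strategy it is reasonable in outline, and you have correctly identified the crux: the packing argument for $V_{\mathrm{bulk}}$ requires uniform control on the intrinsic geometry at scale $\rho\sim(1+h^2)^{-1/2}$, specifically an upper area bound $\Area(B_{2\rho})\lesssim\rho^2$ and a Dirichlet eigenvalue estimate $\la_1(B_\rho)\lesssim\rho^{-2}$, neither of which holds for an arbitrary branched CMC immersion because the induced Gauss curvature is unbounded below. This is not a technicality but the heart of the problem, and your write-up acknowledges as much. The topological part via harmonic $1$-forms is well established in space forms, but extending it to general $N$ with strictly positive curvature, and then showing that the bulk and topological subspaces are effectively independent inside the negative cone of $\ed^2\A_h$, are both genuine open issues. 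In short: you have sketched a plausible line of attack and honestly flagged its gaps, but what you have written is a research programme, not a proof, which is consistent with the statement's status in the paper.
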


In the presence of strictly negative sectional curvatures the estimate given in Theorem \ref{thm:main-theorem} is seen to be highly sub-optimal by considering spheres of large radius in a hyperbolic three-manifold. All CMC spheres in a hyperbolic three manifold are round (and the boundaries of geodesic balls) \cite{hf1,Chern83} and satisfy $i_{\A_h} + n_{\A_h} = 4$, but the Willmore energy grows exponentially with the radius: Given an arbitrary radius, there are closed hyperbolic three-manifolds admitting a CMC sphere of this radius and we always have that the mean curvature satisfies $h^2>4$ and $h^2\to 4$ as the radius diverges as can be checked directly.  

In the presence of sufficiently negative ambient sectional curvatures relative to the size of the mean curvature, we are able to obtain an index bound purely in terms of genus. This bound still does not apply to CMC spheres in hyperbolic space, however, showing that the hypothesis $h^2\leq 4|\kappa_0|$ in the below is indeed optimal: 
\begin{theorem}[cf. Corollary 4.1 in \cite{EM08}]\label{thm:curv}
Suppose that $u:\Sigma^2\to  N^3$ is a branched CMC $h$-immersion inside an orientable $3$-manifold $N$ whose sectional curvatures satisfy $\kappa^N\leq \kappa_0<0$.
If $h^2\leq 4|\kappa_0|$ then either
\begin{enumerate}
	\item $  i_{\A_h} + n_{\A_h}\leq r, $
where $r$ is a purely topological quantity defined in Theorem \ref{thm:main-theorem} or 
\item $u$ is totally umbilic, $h^2\equiv 4|\kappa_0|$, $Ric_N(\nu,\nu)\equiv -2|\kappa_0|$, $i_{\A_h}=0$ and $n_{\A_h} = 1$.
\end{enumerate}  
\end{theorem}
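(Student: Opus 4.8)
\medskip
\noindent\textbf{Proof strategy.} The plan is to run the scheme of \cite[Corollary~4.1]{EM08}, now in the CMC setting. By the extension of the energy comparison to \emph{arbitrary} conformal maps (Section~\ref{sec:comparison-sec-var-area-energy}),
$$\dform^2\A_h(u)[v,v]=\dform^2 E_h(u)[v,v]-4\int_\Sigma\abs{\mu}^2\id\Sigma\qquad\text{for all }v\in\Gamma(u^\ast\T N),$$
where $\mu=\mu(v)$ vanishes exactly when $v$ does not infinitesimally move the conformal class, and $\{v:\mu(v)=0\}$ has codimension at most $r$ (the Ejiri--Micallef count, with branch points accounted for exactly as in \cref{thm:ejiri-micallef}). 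The whole argument then reduces to the single claim that the energy Hessian is positive semi-definite, $\dform^2 E_h(u)[v,v]\ge 0$ for every $v$. Granting this, I would argue as in \cite{EM08}: for $f$ lying in a subspace of $C^\infty(\Sigma)$ of codimension $\le r$ one can add a tangential field $w$ to $f\nu$ with $\mu(f\nu+w)=0$, and then, since $\dform^2\A_h$ depends only on the normal part of a variation (see \eqref{eq:normvar}),
$$\int_\Sigma fLf\id\Sigma=\dform^2\A_h(u)[f\nu+w,f\nu+w]=\dform^2 E_h(u)[f\nu+w,f\nu+w]\ge 0 ;$$
hence $L\ge 0$ on a subspace of codimension $\le r$, that is $i_{\A_h}+n_{\A_h}\le r$, which is conclusion~(1).

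\medskip
\noindent\textbf{Positivity of the energy Hessian.} This is the heart of the matter. I would expand $\dform^2 E_h(u)[v,v]$ for $v=v^T+f\nu$ from the explicit second-variation formula (Appendix~\ref{app:full-sec-var-area}, combined with the comparison above; at the conformal critical point $u$ the second-order terms of $E$ and of the enclosed volume $V_h$ combine into a genuine quadratic form in $v$), write the shape operator as $A=\tfrac h2\,g+\circo{A}$ with $\circo{A}$ trace-free, integrate by parts once to move a derivative off $f$, and then check that $\dform^2 E_h(u)[v,v]$ becomes the integral over $\Sigma$ of: a sum of squares in $\D f$, $\D^\Sigma v^T$ and $\circo{A}$; plus $\big(2\abs{\kappa_0}-\tfrac{h^2}{2}\big)f^2+\big(\abs{\kappa_0}-\tfrac{h^2}{4}\big)\abs{v^T}^2$; plus $Q_{\mathrm{amb}}:=-\sum_i\inner{\Rm^N(v,e_i)e_i,v}-\abs{\kappa_0}\big(2f^2+\abs{v^T}^2\big)$. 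Here $Q_{\mathrm{amb}}\ge 0$ because $\kappa^N\le\kappa_0<0$ gives $\inner{\Rm^N(v,e_i)e_i,v}=\kappa^N(v,e_i)\abs{v\wedge e_i}^2\le\kappa_0\abs{v\wedge e_i}^2$ together with $\sum_i\abs{v\wedge e_i}^2=2f^2+\abs{v^T}^2$, while the two scalar coefficients are $\ge 0$ precisely because $h^2\le 4\abs{\kappa_0}$. As everything else is a square, $\dform^2 E_h(u)[v,v]\ge 0$.

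\medskip
\noindent\textbf{The rigid case.} Restricted to normal variations this reads $\dform^2 E_h(u)[f\nu,f\nu]=\int_\Sigma\abs{\D f}^2+\big(\abs{A}^2-Ric_N(\nu,\nu)-h^2\big)f^2\id\Sigma$, and the potential is $\ge 0$ since $h^2-\abs{A}^2=2\kappa_1\kappa_2\le\tfrac12(\kappa_1+\kappa_2)^2=\tfrac12 h^2\le 2\abs{\kappa_0}$ while $-Ric_N(\nu,\nu)=-\kappa^N(\nu,e_1)-\kappa^N(\nu,e_2)\ge-2\kappa_0=2\abs{\kappa_0}$. If this potential is not identically zero, conclusion~(1) already holds. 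If it is identically zero, all the inequalities just used hold pointwise with equality: $\kappa_1=\kappa_2$ (so $u$ is totally umbilic), $\tfrac12 h^2=2\abs{\kappa_0}$ (so $h^2\equiv 4\abs{\kappa_0}$), and $\kappa^N(\nu,e_i)\equiv\kappa_0$ (so $Ric_N(\nu,\nu)\equiv 2\kappa_0=-2\abs{\kappa_0}$). Then $\abs{A}^2\equiv h^2-2\abs{\kappa_0}=2\abs{\kappa_0}$, hence $\abs{A}^2+Ric_N(\nu,\nu)\equiv 0$ and $L=-\Dl$; since $\Sigma$ is closed and connected, $i_{\A_h}=\indx(-\Dl)=0$ and $n_{\A_h}=\nullity(-\Dl)=1$, which is conclusion~(2). (Conclusion~(1) still holds in this case too, since umbilicity in negative ambient curvature forces $K_\Sigma\le 0$, hence $g\ge 1$ and $r\ge 2$; the content of~(2) is the sharper information.)

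\medskip
\noindent\textbf{Main difficulty.} Given the Appendix formula for $\dform^2 E_h$ and the extended comparison of Section~\ref{sec:comparison-sec-var-area-energy}, the genuinely new input is short: the sum-of-squares rearrangement above and the elementary numerology making its two scalar coefficients --- equivalently $\abs{A}^2-Ric_N(\nu,\nu)-h^2$ --- non-negative exactly when $h^2\le 4\abs{\kappa_0}$, which is what makes the hypothesis sharp and pins the rigid case at $h^2=4\abs{\kappa_0}$ (and is why, as noted after the statement, large CMC spheres in hyperbolic manifolds, for which $h^2>4\abs{\kappa_0}$, fall outside the theorem). The care needed is at the branch points, where $u$ is not an immersion and $\nu$, $A$ are singular: one must check that all integrands are integrable and that the integration by parts and the rearrangement are legitimate there, and that the codimension-$\le r$ statement for $\{\mu=0\}$ together with the tangential-correction step go through verbatim for branched CMC maps --- which is part of the general machinery already set up for \cref{thm:main-theorem}.
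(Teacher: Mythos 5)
Your overall architecture is the same as the paper's: reduce to positivity of $\dform^2 E_h$ via the second-variation comparison and \cref{thm:bound-index-cmc-energy}, prove that positivity from $\kappa^N\le\kappa_0<0$ and $h^2\le 4|\kappa_0|$, and extract the rigidity statement from the equality case. Your treatment of purely normal variations is correct and clean: the identity $\dform^2 E_h(u)[f\nu,f\nu]=\int_\Sigma|\D f|^2+(|A|^2-Ric_N(\nu,\nu)-h^2)f^2\id\Sigma$ checks out, the pointwise chain $h^2-|A|^2=2\kappa_1\kappa_2\le\tfrac12h^2\le 2|\kappa_0|\le -Ric_N(\nu,\nu)$ is right, and concluding $L=-\Delta$ in the equality case gives $(i_{\A_h},n_{\A_h})=(0,1)$ more transparently than the paper's tracing of equalities. (Your parenthetical that conclusion (1) also holds in the rigid case is false in general, since $r$ can be $0$; it is also not needed.)

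The gap is in the step you yourself flag as the heart of the matter. The variations $v=f\nu+w$ actually used in the argument have nonzero tangential part, so you need positivity of $\dform^2 E_h(u)[v,v]$ for general $v$, and your proof of this is an asserted but unverified sum-of-squares rearrangement with the very specific coefficients $(2|\kappa_0|-\tfrac{h^2}{2})f^2+(|\kappa_0|-\tfrac{h^2}{4})|v^T|^2$. These do not come out of the obvious estimates: the volume cross-term is controlled by the Peter--Paul bound \eqref{eq:pp}, which costs $\eps h^2|v|^2+\tfrac{1}{2\eps}|\D v|^2$; after absorbing the gradient term this leaves a uniform penalty of order $\tfrac{h^2}{2}|v|^2$ on $f^2$ and $|v^T|^2$ alike, and your own (correct) sharpening of the curvature term to $|\kappa_0|(2f^2+|v^T|^2)$ then yields a tangential coefficient $|\kappa_0|-\tfrac{h^2}{2}$, which is negative for $2|\kappa_0|<h^2\le 4|\kappa_0|$. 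So the tangential direction is precisely where the inequality is delicate, and the ``check that'' carries the entire weight of the proof; as written it is missing. (This deserves real care: the paper's own estimate of the curvature term by $2|\kappa_0||v|^2$ is only valid when $v$ is normal, so you cannot simply defer to the published argument here.) Separately, $L\ge 0$ on a subspace of codimension $\le r$ only bounds the index, not the index plus nullity; to conclude $i_{\A_h}+n_{\A_h}\le r$ you need \emph{strict} positivity of $\dform^2 E_h$ along the corrected variations $f\nu+w$, so the dichotomy between ``strictly positive'' and ``admits a null direction'' must be run on the full Hessian, not only on the normal-variation potential $|A|^2-Ric_N(\nu,\nu)-h^2$ as in your rigid-case paragraph.
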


\paragraph{Acknowledgements}
The second named author is indebted to Mario Micallef for his encouragement and interest in this work.
The second author was supported by the EPSRC grant EP/W026597/1. 
The first author was supported by the EPSRC grant EP/W523860/1, project reference 2758306.

\section{Comparison of second variations of area and energy at a conformal map}\label{sec:comparison-sec-var-area-energy}

Consider a smooth immersion $u:\Sigma\to N^n$ from a closed manifold $\Sigma$ into a Riemannian manifold $N$. We equip $\Sigma$ with the pulled-back metric $u^\ast g$, which we still denote by $g$, making $u$ an isometric immersion. The $g$-inner product is denoted $\lan\cdot,\cdot\ra $ and the induced volume form is denoted $\ed \Sigma$.
If $v\in \Gamma (u^\ast \T N)$ is an infinitesimal variation, which we decompose via $v=\sigma+s$ with $s=v^\bot \in \Gamma( \mathcal{V}\Sigma)$ and $\sigma = v-s \in \Gamma(\T \Sigma)$, then (see \cref{app:full-sec-var-area}) we show that the Hessian of area in direction $v$ is as follows, noting that for us it is crucial both that $u$ is not necessarily minimal and $v$ is not necessarily normal:
\begin{eqnarray*}
\ed^2 \A^m (u)[v,v] &=& \int_{\Sigma} |\Db s|^2 -\sum_{i,j} \lan A(E_i,E_j), s \ra  ^2 - \sum_i Rm^N(s,E_i,E_i,s) + \lan s, H\ra^2 \\ 
&&    	\qquad + \lan A(\sigma,\sigma), H\ra + 2 \lan \D^\bot_\sigma s, H\ra \id \Sigma
\end{eqnarray*}
where $A$ is the second fundamental form, $H=\text{tr} A$ is the mean curvature vector and $\{E_i\}$ is an arbitrary orthonormal frame at each point.
Here we define the Hessian in the usual way: if $u_t:\Si\times(-\eps,\eps)\to N$ is a one-parameter family of variations of $u_0=u$ with $\pd_t u_t |_{t=0} = v$, then
	$$ \ed^2 \A^m(u)[v,v] = \pl{^2}{t^2}\vlinesub{t=0}\A^m(u(\cdot,t)) - \ed \A^m(u)\sbrak{\pl{^2}{t^2}u(\cdot,0)},$$
where (as in the Appendix) by $u_{tt}$ we mean $\nabla_{u_t} u_t$.
For a derivation of the above under the additional assumption that $u$ is minimal see \cite{spivak-dg4-1999}. For a derivation in full generality see \cref{app:full-sec-var-area}. 
We further remind the reader that if $E(u) = \fr12\int_\Sigma |\ed u|^2\id \Sigma $ for an arbitrary (not necessarily harmonic) map $u:(\Sigma,h)\to (N,g)$ between Riemannian manifolds, we have  
\begin{eqnarray*}
	\ed^2 E(u)[v,v]=  \int_\Sigma |\D v|^2 - \tr_h  Rm^N(v,\ed u,\ed u,v)\id \Sigma.
\end{eqnarray*}
In the above $\D$ and $\D^\bot$ denote the induced connections on $u^\ast \T N$  and $\mathcal{V}\Sigma$ respectively. 

We now restrict to dimension $m=2$. Let $\Sigma$ be a closed Riemann surface and $u:\Sigma \to N$ a (possibly branched) smooth conformal immersion.
We will use the notation $\lan,\ra$ as the real inner product $g$ on $N$, extended complex bi-linearly to $\T_\C N =\T N \otimes_{\R} \C $. We use the same notation for $u^\ast g$ the pulled-back metric on $u^\ast (\T_\C N)$. In a local complex coordinate $z=x+iy$, we use the notation $\de_z=\fr12(\de_x - i \de_y)$ and  $u_z=\fr12(u_x-iu_y)$. Defining $\lambda$ so that $e^{2\la}=2|\dz|_g^2=2|u_z|_g^2$ we have $u^\ast g = e^{2\la}(\ed x^2 + \ed y^2)$ and $\ed \Sigma = e^{2\la}\ed x \ed y$. $\D$ and $\D^\bot$ will be extended complex linearly as required.  

As in Ejiri-Micallef \cite{EM08} we let $\xi$ denote the ramified tangent bundle so that $u^\ast(\T N) = \xi \oplus \mathcal{V}\Sigma$ and we split $v\in \Gamma(u^\ast \T N)$ as above via $v=\sigma + s$ for $\sigma\in \Gamma(\xi)$ and $s\in \Gamma(\mathcal{V}\Sigma)$ noting that the complex structure on $\Sigma$ allows us to further decompose $\xi_\C = \xi \otimes_\R \C = \xi^{1,0} + \xi^{0,1}$.    

Given $\sigma \in \Gamma(\xi)$ we may decompose, away from the branch points, 
$\sigma = 2e^{-2\la}\lan \sigma , u_{\zb}\ra u_z + 2e^{-2\la}\lan\sigma ,u_z\ra u_{\zb} = \sigma^{1,0} + \sigma^{0,1}.$ Similarly we define the related $\sigma_\Sigma$ below which we consider to be a section of $\T_\C \Sigma$ away from the branch points: $\sigma_\Sigma := 2e^{-2\la}\lan \sigma , u_{\zb}\ra \de_z + 2e^{-2\la}\lan\sigma ,u_z\ra \dzb = \sigma_\Sigma^{1,0} + \sigma_\Sigma^{0,1}$.

In this setting, the second variation of area can be written (as may be checked directly): 
\begin{eqnarray}\label{eq:d2A}
\ed^2\A (u)[v,v]&=& \int_\Sigma 4e^{-2\la}|\D_\dz ^\bot s|^2 - 8e^{-4\la}|\lan s,A(u_z,u_z)\ra|^2 + \fr12\lan s,H\ra^2 -4e^{-2\la}Rm^N(s,u_z,u_{\zb}, s)  \nn \\
&&\qquad +  \lan A(\sigma,\sigma), H\ra + 2 \lan \D^\bot_{\sigma_\Sigma} s, H\ra \id \Sigma  \nn\\
&=& \int_\Sigma 4|\D_\dz ^\bot s|^2 - 8e^{-2\la}|\lan s,A(u_z,u_z)\ra|^2 + \fr12e^{2\la}\lan s,H \ra^2 -4Rm^N(s,u_z,u_{\zb}, s) \nn \\
&&\qquad +  e^{2\la}\lan A(\sigma,\sigma), H\ra + 2e^{2\la} \lan \D^\bot_{\sigma_\Sigma} s, H\ra \id x\ed y
\end{eqnarray}
and the second variation of $E$ (with $h=u^\ast g$ above) looks like 
\begin{eqnarray}\label{eq:d2E}
	\ed^2 E(u)[v,v]&=&  \int_\Sigma 4e^{-2\la}|\D_\dz v|^2 - 4 e^{-2\la}Rm^N(v,u_z,u_{\zb}, v)\id \Sigma \nn\\
	&=&  \int_\Sigma 4|\D_\dz v|^2 - 4 Rm^N(v,u_z,u_{\zb}, v)\id x\ed y. 
\end{eqnarray}

\paragraph{Infinitesimal conformal variations:} Following \cite{EM08} we will linearise around a conformal variation to find an infinitesimal formulation. If $u_t:\Sigma \to N$ is a one-parameter family of branched conformal immersions with $u_0=u$, $\de_t u_t \vline_{t=0} = v$ then we must have $\lan \dz u_t, \dz u_t\ra = 0$ for all small $t$, and thus
$$ \lan \D_\dz v, u_z\ra =0. $$

\

Say that $v\in \Gamma(u^\ast\T N)$ is an infinitesimal variation through conformal immersions if, with $v=\sigma+s$
$$ 0=(\D_\dz^\top \sigma+(\D_\dz s)^{\top} )^{0,1} $$
Since, using that $\lan u_z, u_z\ra = \lan u_{\zb},u_{\zb}\ra\equiv 0$,
\begin{equation}\label{eq:sig}
\rbrak{\D^\top_\dz \sigma^{1,0}}^{0,1} = 0\quad\text{and}\quad\rbrak{\D^\top_\dz \sigma^{0,1}}^{1,0}=0,
\end{equation}
we have that $v=\sigma+s$ is an infinitesimal variation through conformal immersions if 
$$ \D^\top_\dz \sigma^{0,1} + ((\D_\dz s)^{\top} )^{0,1}=0. $$

Inspired by this, and noting that (using $A(u_z,u_{\zb}) = \fr14 e^{2\la} H$) 
\begin{equation}\label{eq:dstan}
(\D_{\dz} s)^\top = -2e^{-2\la} \lan s, A(u_z,u_z)\ra u_{\zb} - \fr12\lan s, H\ra u_z	
\end{equation}
if we are given an arbitrary variation $v\in \Gamma(u^\ast \T N)$ which we decompose via $v=\sigma+s$ with $s=v^\bot \in \Gamma(u^\ast \mathcal{V}\Sigma)$ and $\sigma \in \Gamma(\xi)$, we refer to 
\begin{equation}\label{eq:infcon}
\eta = \D^\top_\dz \sigma^{0,1}-2e^{-2\la}\lan s,A(u_z,u_z)\ra u_{\zb}
\end{equation}
as the infinitesimal conformal defect. Let also $\mu=\eta\otimes \ed z\in \Gamma(\xi^{0,1} \otimes (\T^\ast\Sigma)^{1,0})$. Note also that, in view of \eqref{eq:dstan}, \eqref{eq:infcon} is the same as that appearing in \cite[equation (2.14)]{EM08} once we set $H=0$ as is done there. We remind the reader that the novelty of the below is that it holds for \emph{any} conformal map $u$ without constraint on its mean curvature $H$.
Related inequalities, assuming further assumptions on $u$ and $\Sigma$, can be found in \cite{cheng-zhou-2023,gao-zhu-2024}.
 
\begin{theorem}[cf. Theorem 2.1 in \cite{EM08}]\label{thm:second-var-area-energy}
Let $\Sigma$ be a closed Riemann surface and $N^n$ a closed Riemannian manifold. Suppose that $u:\Sigma\to N$ is a branched conformal immersion and $v\in \Gamma (u^\ast \T N)$. Then decomposing $v=\sigma + s$ for $\sigma\in \Gamma(\xi)$ and $s\in \Gamma(\mathcal{V}\Sigma)$ we have 
\begin{eqnarray*}
\ed^2 \A(u)[v,v] &=& \ed^2 E(u)[v,v]-8\int_\Sigma |\eta|^2\id x\ed y \\
&=&  \ed^2 E(u)[v,v]-4\int_\Sigma |\mu|^2\id \Sigma  	
\end{eqnarray*}
where $\eta$ is the infinitesimal conformal defect (see \eqref{eq:infcon}).  
\end{theorem}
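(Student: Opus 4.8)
The plan is to prove the identity by directly comparing the two integral expressions \eqref{eq:d2A} and \eqref{eq:d2E}, working away from the branch points (which form a discrete set, hence of measure zero, so integrals are unaffected) and then to recognise the difference as $8\int_\Sigma|\eta|^2\,\ed x\,\ed y$. First I would take the second variation of energy \eqref{eq:d2E} and split the variation $v=\sigma+s$ with $\sigma\in\Gamma(\xi)$, $s\in\Gamma(\mathcal V\Sigma)$. The key is to expand $|\D_\dz v|^2 = \lan \D_\dz v,\ol{\D_\dz v}\ra$ with respect to the orthogonal (over $\R$) splitting into tangential and normal parts: $\D_\dz v = \D_\dz\sigma + \D_\dz s$, and then further split each into its tangential and normal components using \eqref{eq:dstan} for $(\D_\dz s)^\top$ and the fact that $(\D_\dz s)^\bot = \D^\bot_\dz s$. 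Because $\sigma$ is a section of the (complexified) tangent bundle and $s$ of the normal bundle, the cross terms organise themselves: the purely normal part of $\D_\dz v$ is $\D^\bot_\dz s + (\D_\dz\sigma)^\bot = \D^\bot_\dz s + A(\sigma,\dz u)$-type terms, and the purely tangential part is $\D^\top_\dz\sigma + (\D_\dz s)^\top$.

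Next I would carefully identify $|\D_\dz v|^2$ as the sum of $|(\D_\dz v)^\top|^2$ and $|(\D_\dz v)^\bot|^2$. In the tangential part, using the complex decomposition $\sigma=\sigma^{1,0}+\sigma^{0,1}$ and \eqref{eq:sig}, together with \eqref{eq:dstan}, the $(0,1)$-component of $(\D_\dz v)^\top$ is exactly (a constant multiple of) $\eta$ as defined in \eqref{eq:infcon}: indeed $\D^\top_\dz\sigma^{0,1}$ contributes the first term and the $-2e^{-2\la}\lan s,A(u_z,u_z)\ra u_{\zb}$ piece of $(\D_\dz s)^\top$ contributes the second, while $\D^\top_\dz\sigma^{1,0}$ has no $(0,1)$-component by \eqref{eq:sig} and the $-\tfrac12\lan s,H\ra u_z$ piece is $(1,0)$. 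Meanwhile the $(1,0)$-component of $(\D_\dz v)^\top$ and the normal part $(\D_\dz v)^\bot$ reassemble precisely into the corresponding terms that appear in \eqref{eq:d2A}: the $4|\D^\bot_\dz s|^2$ term, the $-8e^{-2\la}|\lan s,A(u_z,u_z)\ra|^2$ term (from $|{-\tfrac12\lan s,H\ra u_z}|^2$ combined with curvature/umbilic identities — here one must be careful to track which pieces come from $|(\D_\dz v)^\top|^2$ versus the curvature term), the $\tfrac12 e^{2\la}\lan s,H\ra^2$ term, and, via $A(\dz u,\dz u)$ cross terms against $\sigma$, the $e^{2\la}\lan A(\sigma,\sigma),H\ra$ and $2e^{2\la}\lan\D^\bot_{\sigma_\Sigma}s,H\ra$ terms. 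The curvature terms $-4Rm^N(v,u_z,u_{\zb},v)$ in \eqref{eq:d2E} must be matched against $-4Rm^N(s,u_z,u_{\zb},s)$ in \eqref{eq:d2A} by noting that the difference is a tangential-curvature contribution which, together with the Gauss equation, is absorbed into the second fundamental form terms; alternatively one shows the tangential part of $v$ only contributes to first order in a way that integrates against $H$, matching the last two terms of \eqref{eq:d2A}.

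The cleanest route is probably to avoid re-deriving \eqref{eq:d2A} and instead subtract: compute $\ed^2E(u)[v,v]-\ed^2\A(u)[v,v]$ termwise using the two displayed formulas, and show the result equals $8\int_\Sigma|\eta|^2\,\ed x\,\ed y$. For this I would write $|\D_\dz v|^2 = |\D^\bot_\dz s|^2 + |(\D_\dz s)^\top|^2 + |\D_\dz\sigma|^2 + 2\mathrm{Re}\lan \D^\bot_\dz s + (\D_\dz s)^\top, \D_\dz\sigma\ra$ (using orthogonality of the remaining cross term $\lan\D^\bot_\dz s,(\D_\dz s)^\top\ra=0$), substitute \eqref{eq:dstan}, and expand $|\D_\dz\sigma|^2$ via the $(1,0)\oplus(0,1)$ splitting and the connection formula $\D_\dz\sigma = \D^\top_\dz\sigma + A(\sigma_\Sigma,\dz u)$-type terms. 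After multiplying through by $4$ and integrating, most terms cancel against \eqref{eq:d2A} and the leftover is $|\D^\top_\dz\sigma^{0,1} - 2e^{-2\la}\lan s,A(u_z,u_z)\ra u_{\zb}|^2$ times $8$ — that is, $8|\eta|^2$. The final identity $8\int_\Sigma|\eta|^2\,\ed x\,\ed y = 4\int_\Sigma|\mu|^2\,\ed\Sigma$ is immediate from $\mu=\eta\otimes\ed z$, $|\ed z|^2_{u^\ast g} = 2e^{-2\la}$, and $\ed\Sigma = e^{2\la}\ed x\,\ed y$.

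The main obstacle I anticipate is bookkeeping: correctly separating $|\D_\dz v|^2$ into the pieces that recombine into the normal-bundle terms of \eqref{eq:d2A} versus the pieces that form $|\eta|^2$, while making sure the tangential cross terms involving $\sigma$ and $s$ against $H$ and against $Rm^N$ are accounted for with the right constants and the right $e^{\pm 2\la}$ weights. In particular the term $\lan\D^\bot_{\sigma_\Sigma}s,H\ra$ arises only after commuting a covariant derivative and using $A(u_z,u_{\zb})=\tfrac14 e^{2\la}H$, and the curvature term requires care because \eqref{eq:d2E} has $Rm^N(v,\cdot,\cdot,v)$ with the \emph{full} $v$ whereas \eqref{eq:d2A} has it only with $s$ — the difference is a genuinely tangential contribution that must be shown to cancel against tangential pieces of $|\D_\dz v|^2$ rather than contributing to $|\eta|^2$. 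Since this calculation is, by the authors' own phrasing (``as may be checked directly''), essentially the same as in \cite[Theorem 2.1]{EM08} with the $H=0$ assumption dropped, the conceptual content is low and the work is in verifying that no term involving $H$ spoils the identity; by \eqref{eq:dstan} every new $H$-term sits in the $(1,0)$-part or pairs off, so $\eta$ — which is purely a $(0,1)$-object — is untouched, which is exactly why the comparison survives without minimality.
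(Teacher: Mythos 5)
Your overall strategy coincides with the paper's: expand $4|\D_\dz v|^2$ along the orthogonal splittings $u^\ast\T N=\xi\oplus\mathcal{V}\Sigma$ and $\xi_\C=\xi^{1,0}\oplus\xi^{0,1}$, observe via \eqref{eq:sig} and \eqref{eq:dstan} that the $(0,1)$-part of $(\D_\dz v)^\top$ is exactly $\eta$ from \eqref{eq:infcon}, and match everything else against \eqref{eq:d2A}. The identification $\eta=\big((\D_\dz v)^\top\big)^{0,1}$ is correct and is the right organising principle, and your derivation of the second displayed equality from $\mu=\eta\otimes\ed z$ is fine.

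There is, however, a genuine gap at the decisive step. Since the Hermitian norm respects both splittings, collecting the $(0,1)$-tangential part of $4|\D_\dz v|^2$ pointwise produces only $4|\eta|^2$, not the $8|\eta|^2$ in the statement, and it leaves behind the $(1,0)$-tangential contribution $4\big|\D^\top_\dz\sigma^{1,0}-\tfrac12\lan s,H\ra u_z\big|^2$ together with the curvature terms involving $\sigma$, namely $-4Rm^N(\sigma,u_z,u_{\zb},\sigma)$ and the mixed $\sigma$--$s$ curvature terms --- none of which occur in \eqref{eq:d2A}, so they cannot simply ``cancel against'' it termwise. The identity of \cref{thm:second-var-area-energy} is an \emph{integral} identity, not a pointwise one: the missing half of $|\eta|^2$ is generated by (a) the Gauss equation, which trades $4|A(u_z,\sigma)|^2-4Rm^N(\sigma,u_z,u_{\zb},\sigma)$ for $e^{2\la}\lan A(\sigma,\sigma),H\ra-4Rm^\Sigma(\sigma,u_z,u_{\zb},\sigma)$, and (b) the commutation of $\D^\top_\dz$ and $\D^\top_\dzb$ together with an integration by parts, which converts $-4Rm^\Sigma(\sigma,u_z,u_{\zb},\sigma)$ into $-4|\D^\top_\dz\sigma^{1,0}|^2+4|\D^\top_\dz\sigma^{0,1}|^2$ modulo exact terms (the paper's \eqref{eq:i2}); this is what annihilates the $(1,0)$-quadratic term and doubles the $(0,1)$-one. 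An analogous Bianchi-plus-integration-by-parts manipulation on the $\sigma$--$s$ cross terms is needed to produce $2e^{2\la}\lan\D^\bot_{\sigma_\Sigma}s,H\ra$ and the remaining pieces completing the square in $\eta$. Your sketch asserts the coefficient $8$ and the disappearance of the $(1,0)$-terms without supplying this mechanism, which is where essentially all of the work of the proof lies; as written, a direct termwise subtraction would stall with coefficient $4$ and unmatched tangential and curvature terms.
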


A surface is totally umbilic if and only if $A(u_z,u_z)\ed z\tensor\ed z\equiv0$.
For such surfaces, given a normal variation $v=s$ (so $\sigma = 0$) one sees from \eqref{eq:infcon} that $\eta = 0$.
We have therefore the following corollary.

\begin{corollary}
\label{cor:totally-umbilic}
Let $\Sigma$ be a closed Riemann surface and $N^n$ a closed Riemannian manifold.
Suppose that $u:\Sigma\to N$ is a branched conformal immersion which is totally umbilic.
Then, for every $s\in\Gamma(\mathcal{V}\Sigma)$ we have that
$$ \ed^2 A(u)[s,s] = \ed^2 E(u)[s,s]. $$
\end{corollary}

\begin{proof}[Proof of Theorem \ref{thm:second-var-area-energy}]
When integrating against $\ed x \ed y$, the second variation of energy integrand may be written (see \eqref{eq:d2E})  
\begin{eqnarray}\label{eq:2ndvarenint}
4|\D_\dz v|^2 - 4Rm^N(s,u_z,u_{\zb}, s) &=& 4|\D_\dz^\bot s|^2 +  4|(\D_\dz s)^{\top}|^2+ 4|\D_\dz \sigma|^2 \nn   \\ 
&& + 4\left(\lan\D_\dz s,\D_\dzb \sigma\ra + \lan \D_\dzb s,\D_\dz\sigma \ra\right)	- 4Rm^N(v,u_z,u_{\zb}, v) \nn  \\
&= &4|\D_\dz^\bot s|^2 +  (i) + (ii) - 4Rm^N(s,u_z,u_{\zb}, s) 	
\end{eqnarray}	
where 
$$(i) = 4|(\D_\dz s)^{\top}|^2+ 4|\D_\dz \sigma|^2 - 4Rm^N(\sigma,u_z,u_{\zb},\sigma)$$
and 
$$(ii) = 4\left(\lan\D_\dz s,\D_\dzb \sigma\ra + \lan \D_\dzb s,\D_\dz\sigma \ra\right)- 4Rm^N(\sigma,u_z,u_{\zb},s)- 4Rm^N(s,u_z,u_{\zb},\sigma)$$
\paragraph{Term (i)}
One may check directly from \eqref{eq:dstan} that   
$$4|(\D_\dz s)^{\top}|^2=8e^{-2\la}|\lan s,A(u_z,u_z)\ra|^2 + \fr12 e^{2\la}\lan s,H\ra^2$$
meaning that 
\begin{eqnarray}\label{eq:i1}
	(i) &=& 8e^{-2\la}|\lan s,A(u_z,u_z)\ra|^2 + \fr12e^{2\la}\lan s,H\ra^2 + 4|A(u_z,\sigma)|^2 + 4|\D_\dz^\top \sigma|^2 -4Rm^N(\sigma,u_z,u_{\zb},\sigma)\nn  \\
	&=& 8e^{-2\la}|\lan s,A(u_z,u_z)\ra|^2 + \fr12e^{2\la}\lan s,H\ra^2 +  e^{2\la}\lan A(\sigma,\sigma),H\ra + 4Rm^N(\sigma,u_z,u_{\zb},\sigma) \nn \\
	&&  - 4Rm^\Sigma(\sigma,u_z,u_{\zb},\sigma) + 4|\D_\dz^\top  \sigma^{1,0}|^2+ 4|\D_\dz^\top  \sigma^{0,1}|^2-4Rm^N(\sigma,u_z,u_{\zb},\sigma) \nn \\
	&=&8e^{-2\la}|\lan s,A(u_z,u_z)\ra|^2 + \fr12e^{2\la}\lan s,H\ra^2 +  e^{2\la}\lan A(\sigma,\sigma),H\ra  \nn - 4Rm^\Sigma(\sigma,u_z,u_{\zb},\sigma)\\ 
	&& + 4|\D_\dz^\top  \sigma^{1,0}|^2+ 4|\D_\dz^\top  \sigma^{0,1}|^2 \end{eqnarray}
	where the middle line follows from the Gauss equations, using $|A(u_z,\sigma)|^2 = \lan A(u_z,\sigma),A(\dzb,\sigma)\ra$ and also \eqref{eq:sig}.  
Now, by the symmetries of the curvature tensor and the first Bianchi identity we see that 
\begin{eqnarray}\label{eq:i2}
-4Rm^\Sigma(\sigma,u_z,u_{\zb},\sigma) &=&  -4Rm^\Sigma(\sigma^{0,1},u_z,u_{\zb},\sigma^{1,0})\nn \\
&=&  4Rm^\Sigma(u_z,u_{\zb},\sigma^{0,1},\sigma^{1,0})	\nn \\
&=& 4\lan \D^\top_\dz\D^\top_\dzb \sigma^{0,1},\sigma^{1,0}\ra - 4\lan \D^\top_\dzb \D^\top_\dz\sigma^{0,1},\sigma^{1,0}\ra \nn \\
&=& 4\dz\lan \D^\top_\dzb \sigma^{0,1},\sigma^{1,0}\ra - 4 \dzb\lan \D^\top_\dz\sigma^{0,1},\sigma^{1,0}\ra \nn \\
&& -4|\D^\top_\dz \sigma^{1,0}|^2 + 4|\D^\top_\dz \sigma^{0,1}|^2.
\end{eqnarray}
Putting \eqref{eq:i1} and \eqref{eq:i2} together, the integral $\int_\Sigma (i) \id x\ed y$ becomes  
\begin{eqnarray}\label{eq:i3}
	\int_\Sigma   8e^{-2\la}|\lan s,A(u_z,u_z)\ra|^2 + \fr12e^{2\la}\lan s,H\ra^2 +  e^{2\la}\lan A(\sigma,\sigma),H\ra  + 8|\D_\dz^\top  \sigma^{0,1}|^2  \id x\ed y,
	\end{eqnarray}
where the $\dz$ term (and, similarly, the $\dzb$ term) disappears for the following reason:  we firstly have that the (temporarily-defined) one form $\om:=\inner{\nabla^T_{\dzb}\sigma^{0,1},\sigma^{1,0}}\ed\overline{z}$ is independent of co-ordinates, and so is in fact a globally defined one-form on $\Si$;
we secondly have, setting (temporarily) $f:=\dz\inner{\nabla^T_{\dzb}\sigma^{0,1},\sigma^{1,0}}$, that $\ed\om = f\ed z\wedge\ed\overline{z} = -2if\ed x\wedge\ed y$; finally then by Stokes' theorem we see the vanishing of the claimed terms.

From the definition of $\eta$ we have that 	
\begin{eqnarray*}
	8|\D_\dz^\top  \sigma^{0,1}|^2 &=& 8|\eta|^2 -16e^{-2\la}|\lan s,A(u_z,u_z)\ra|^2 \\
	&& + 16 e^{-2\la}\left(\lan \D^\top_\dz \sigma^{0,1},u_z\ra\lan s,A(u_{\zb},u_{\zb})\ra  + \lan\D^\top_\dzb \sigma^{1,0},u_{\zb}\ra\lan s,A(u_z,u_z)\ra\right) \\
	  &=& 8|\eta|^2 -16e^{-2\la}|\lan s,A(u_z,u_z)\ra|^2 \\
	  && + 8\lan s , A(\D^\top_\dz \sigma^{0,1}, u_{\zb})\ra  + 8 \lan s , A(\D^\top_\dzb \sigma^{1,0}, u_z)\ra \qquad \text{by \eqref{eq:sig}}  \\
	  &=& 8|\eta|^2 -16e^{-2\la}|\lan s,A(u_z,u_z)\ra|^2 \\
	  && - 8\lan (\D_\dzb s)^{\top} , \D^\top_\dz \sigma^{0,1}\ra  - 8 \lan (\D_\dz s)^\top , \D^\top_\dzb \sigma^{1,0}\ra. 
\end{eqnarray*}	
Thus \eqref{eq:i3} gives 
\begin{eqnarray}\label{eq:i4}
	\int_\Sigma (i)\id x \ed y &=& \int_\Sigma   -8e^{-2\la}|\lan s,A(u_z,u_z)\ra|^2 + \fr12e^{2\la}\lan s,H\ra^2 +  e^{2\la}\lan A(\sigma,\sigma),H\ra  + 8|\eta|^2 \nn \\
	&& \qquad  - 8\lan (\D_\dzb s)^{\top} , \D^\top_\dz \sigma^{0,1}\ra  - 8 \lan (\D_\dz s)^\top , \D^\top_\dzb \sigma^{1,0}\ra \id x\ed y. 
\end{eqnarray}

\paragraph{Term (ii)}
By straightforward decomposition of $\sigma$ and the symmetries of the curvature tensor, along with the first Bianchi identity, we have 
\begin{eqnarray}
	(ii) &=& 4\lan \D_\dz s, \D_\dzb \sigma^{1,0}\ra +  4\lan \D_\dz s, \D_\dzb \sigma^{0,1}\ra +  4\lan \D_\dzb s, \D_\dz \sigma^{1,0}\ra+  4\lan \D_\dzb s, \D_\dz \sigma^{0,1}\ra \nn \\
	&& - 4Rm^N(\sigma^{0,1},u_z,u_{\zb},s) - 4Rm^N(\sigma^{1,0}, u_{\zb}, u_z,s) \nn \\
	&=& 4\lan \D_\dz s, \D_\dzb \sigma^{1,0}\ra +  4\lan \D_\dz s, \D_\dzb \sigma^{0,1}\ra +  4\lan \D_\dzb s, \D_\dz \sigma^{1,0}\ra+  4\lan \D_\dzb s, \D_\dz \sigma^{0,1}\ra \nn \\
	&& + 4Rm^N(u_z,u_{\zb},\sigma^{0,1},s) + 4Rm^N( u_{\zb}, u_z,\sigma^{1,0},s) \nn \\
	&=& 4\lan \D_\dz s, \D_\dzb \sigma^{1,0}\ra +  4\lan \D_\dz s, \D_\dzb \sigma^{0,1}\ra +  4\lan \D_\dzb s, \D_\dz \sigma^{1,0}\ra+  4\lan \D_\dzb s, \D_\dz \sigma^{0,1}\ra \nn \\
	&& + 4\dz\lan s,\D_\dzb \sigma^{0,1}\ra - 4\lan \D_\dz s,\D_\dzb \sigma^{0,1} \ra - 4\dzb\lan s,\D_\dz \sigma^{0,1}\ra + 4 \lan \D_\dzb s,\D_\dz \sigma^{0,1} \ra  \nn \\ 
	&&+ 4\dzb\lan s,\D_\dz \sigma^{1,0}\ra - 4\lan \D_\dzb s,\D_\dz \sigma^{1,0} \ra - 4\dz\lan s,\D_\dzb \sigma^{1,0}\ra + 4 \lan \D_\dz s,\D_\dzb \sigma^{1,0} \ra \nn
\end{eqnarray}
meaning that 
\begin{eqnarray}\label{eq:ii}
	\int_{\Sigma} (ii) \id x \ed y &=& \int_\Sigma   8\lan \D_\dzb s, \D_\dz \sigma^{0,1}\ra +8\lan \D_\dz s, \D_\dzb \sigma^{1,0}\ra \id x \ed y \nn\\
	&=& \int_{\Sigma} 8\lan \D_\dzb^\bot  s , A(u_z, \sigma^{0,1})\ra + 8\lan \D_\dz^\bot s, A(u_{\zb},\sigma^{1,0})\ra \nn \\
	&&\qquad  +  8\lan (\D_\dzb s)^{\top} , \D^\top_\dz \sigma^{0,1}\ra  + 8 \lan (\D_\dz s)^\top , \D^\top_\dzb \sigma^{1,0}\ra \id x \ed y \nn \\
	&=& \int_{\Sigma} 8\lan \D^\bot_{\sigma^{0,1}_\Sigma}s, A(u_z,u_{\zb})\ra  + 8\lan \D^\bot_{\sigma^{1,0}_\Sigma}s, A(u_{\zb},u_z)\ra\nn \\
	&&\qquad  +  8\lan (\D_\dzb s)^{\top} , \D^\top_\dz \sigma^{0,1}\ra  + 8 \lan (\D_\dz s)^\top , \D^\top_\dzb \sigma^{1,0}\ra \id x \ed y \nn \\
	&=& \int_\Sigma 2e^{2\la}\lan \D_{\sigma_\Sigma}^\bot s, H\ra +  8\lan (\D_\dzb s)^{\top} , \D^\top_\dz \sigma^{0,1}\ra  + 8 \lan (\D_\dz s)^\top , \D^\top_\dzb \sigma^{1,0}\ra \id x \ed y \qquad\quad  
\end{eqnarray}
where the penultimate line follows from the definitions $\sigma_\Sigma^{1,0}=2e^{-2\la}\lan\sigma,u_{\zb}\ra \de_z$, $\sigma_\Sigma^{0,1} =2e^{-2\la} \lan\sigma,u_z\ra \dzb$, $\sigma^{1,0}=2e^{-2\la}\lan\sigma,u_{\zb}\ra u_z$ and $\sigma^{0,1} =2e^{-2\la} \lan\sigma,u_z\ra u_{\zb}$. The final line follows from $A(u_z,u_{\zb}) = \fr14 e^{2\la}H$. 

Putting \eqref{eq:d2A}, 
\eqref{eq:d2E} and \eqref{eq:2ndvarenint} together with \eqref{eq:i4} and \eqref{eq:ii} gives 
\begin{eqnarray*}
\ed^2 E(u)[v,v] &=& \int_{\Sigma} 4|\D_\dz^\bot s|^2 -8e^{-2\la}|\lan s,A(u_z,u_z)\ra|^2 + \fr12e^{2\la}\lan s,H\ra^2 +  e^{2\la}\lan A(\sigma,\sigma),H\ra  + 8|\eta|^2  \\
&&+2e^{2\la}\lan \D_{\sigma_\Sigma}^\bot s, H\ra	- 4Rm^N(s,u_z,u_{\zb}, s) \id x \ed y  \\
&=& \ed^2\A(u)[v,v] + 8\int_{\Sigma} |\eta|^2 \id x\ed y. 
\end{eqnarray*}\end{proof}

\section{The CMC index bounds: proofs of the main Theorems}\label{sec:cmc-index-bounds}

\begin{theorem}\label{thm:bound-index-cmc-energy}
Let $u$ be a CMC $h$-immersion.
Then $i_{\A_h} + n_{\A_h} \leq i_{E_h} + n_{E_h} + r$, where $r$ is as in Theorem \ref{thm:main-theorem}.
If in addition $u$ is totally umbilic then $i_{\A_h} + n_{\A_h} \leq i_{E_h} + n_{E_h}$.
\end{theorem}
\begin{proof}
This is analogous to the proof of Theorem 1.1 in \cite{EM08}, but we give an outline for the convenience of the reader. 

Let $V\In \Gamma(u^*\T N)$ be the non-positive eigenspace of $L$, meaning that $\text{dim}V = i_{\A_h} + n_{\A_h}$.
Let $\phi:V\to \xi^{0,1}\tensor(\T^*\Si)^{1,0}$ be the linear map $\phi(f):=2e^{-2\la}\inner{f\nu,A(u_z,u_z)}u_{\zb}\tensor\ed z$. The equation $\mu = 0$ (see Theorem \ref{thm:second-var-area-energy}) can be written 
\begin{equation}\label{eqn:the-de}
D\sigma^{0,1} = \phi(s),
\end{equation}
where $D:=\ed z\otimes\nabla^\top_{\pd_z}$ is defined globally.

By the Fredholm alternative, we may solve \eqref{eqn:the-de} if and only if $\phi(s)\perp\text{ker}D^*$, where
$$
D^\ast : \Gamma(\xi^{0,1} \otimes (\T^\ast\Sigma)^{1,0}) \to \Gamma(\xi^{0,1})
\quad\text{with}\quad
D^\ast = i \ast \db,
$$
and $\db$ is the induced connection $\db$-operator on $\xi^{0,1} \otimes (\T^\ast\Sigma)^{1,0}$.
Accordingly, linearly decompose $V=V_1\oplus V_2$ where
$$
V_1 = \{s : \phi(s)\in \left(\text{ker}D^*\right)^\perp \}
\quad\text{and}\quad
V_2 = \{s : \phi(s)\in \text{ker}D^*\},
$$
where we arrange that $\text{ker}\phi \In V_1$ so that $\phi|_{V_2}$ is injective. 
 
For each $s\in V_1$ we may find a corresponding $\sigma_s$ solving \eqref{eqn:the-de}.
Moreover, since \eqref{eqn:the-de} is linear we may arrange for $\sigma_s$ to depend linearly on $s$.
Then, for each $s\in V_1$ we have, by \eqref{eq:normvar} and Theorem \ref{thm:second-var-area-energy},
$$ \ed^2\A_h(u)[s,s] = \ed^2\A_h(u)[s+\sigma_s,s+\sigma_s] = \ed^2 E_h(u)[s+\sigma_s,s+\sigma_s], $$
and so straightforwardly we have $\text{dim}V_1\leq i_{E_h}+n_{E_h}$.
On the other hand, $\text{dim}V_2\leq\text{dim\,ker}D^*$ may be estimated using Riemann--Roch; see \cite[Proof of Theorem 1.1, p. 231]{EM08} for full details.
One obtains $\text{dim\,ker}D^*\leq r$ as required.

Finally, if $u$ is $h$-CMC and also totally umbilic ($A(u_z,u_z)\ed z\tensor\ed z\equiv0$), then $\phi$ is the zero map so $V=V_1$ and the result follows.
\end{proof}

\begin{remark}
By starting with $V$ being the negative eigenspace of $L$, an easy modification of the above proof gives a comparison of the indices $i_{\A_h}\leq i_{E_h} + r$, without the nullities.
We note, however, that it is not obviously the case that $i_{E_h}\leq i_{\A_h}$ at a $h$-CMC surface when $H=h\nu\neq 0$ since there may be purely tangential variations in the negative eigenspace of $\ed^2 E_h(u)$; by Theorem \ref{thm:second-var-area-energy}, and Corollary \ref{cor:a3},
$$ \ed^2 E_h(u)[\sigma,\sigma] = \int_\Si\! 4\abs{D\sigma^{0.1}}^2 - \langle A(\sigma,\sigma),H\rangle\,\ed\Si. $$
\end{remark}

The proofs of Theorems \ref{thm:main-theorem} or \ref{thm:curv} follow from Theorem \ref{thm:bound-index-cmc-energy} combined with Theorems \ref{thm:energy-bound} or \ref{thm:index-bound-curvature-assumptions} respectively, which we state below and prove in the next section. 

\begin{theorem}\label{thm:energy-bound}
Suppose that $u:\Sigma\to  N^3\emb \R^d$ is a branched CMC $h$-immersion from a closed Riemann surface to an orientable $3$-manifold $N$, itself isometrically embedded in $\R^d$. Then there exists some uniform $C<\infty$ so that 
	$$ i_{E_h} + n_{E_h} \leq C\rbrak{4J^2 + h^2}\Area(\Si), $$
where $J$ is the sup-norm of the largest eigenvalue of the second fundamental form $\sff$ of the embedding $N\into\R^d$.
\end{theorem}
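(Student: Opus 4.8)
\textbf{The plan} is to follow Ejiri--Micallef's strategy for the minimal case, the genuinely new point being the bookkeeping needed for the enclosed-volume term $V_h$. The guiding observation is that the Jacobi operator $L$ of $\A_h$ carries the potential $|A|^2+Ric_N(\nu,\nu)$, which is \emph{not} controlled by the area, whereas the second variation of $E_h$ --- once one passes to the ambient Euclidean structure --- is of the form $-\Delta-Q$ with $Q$ bounded purely in terms of the second fundamental form $\sff$ of the embedding $N\into\R^d$ together with $h$; a Li--Yau/Cheng--Tysk-type eigenvalue estimate then applies because $u$ realises $\Sigma$ as a branched immersed surface in $\R^d$ whose mean curvature is controlled.

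\textbf{First} I would rewrite $\ed^2 E_h(u)[v,v]$ using the embedding. Regarding $v\in\Gamma(u^\ast\T N)$ as an $\R^d$-valued map and writing $\partial_z$ for the flat derivative, one has $|\D_\dz v|^2=|\partial_z v|^2-|\sff(u_z,v)|^2$ (the $N$-normal part of $\partial_z v$ being $\sff(u_z,v)$), while the Gauss equation for $N\into\R^d$ gives $Rm^N(v,u_z,u_{\zb},v)=\langle\sff(v,v),\sff(u_z,u_{\zb})\rangle-|\sff(u_z,v)|^2$; substituting into \eqref{eq:d2E} the two copies of $|\sff(u_z,v)|^2$ cancel, so
\[
\ed^2 E(u)[v,v]=\int_\Sigma 4|\partial_z v|^2-4\langle\sff(v,v),\sff(u_z,u_{\zb})\rangle\id x\ed y,
\]
and the potential is bounded, $|4\langle\sff(v,v),\sff(u_z,u_{\zb})\rangle|\leq CJ^2|v|^2 e^{2\la}$, since $|\sff(v,v)|\leq J|v|^2$ and $|\sff(u_z,u_{\zb})|\leq J|u_z|^2=\tfrac12 Je^{2\la}$. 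For the volume term I would combine the second-variation formula for $\A_h$ from Appendix~\ref{app:full-sec-var-area} with \cref{thm:second-var-area-energy} (the functionals $\A_h$ and $E_h$ differing by $8\int_\Sigma|\eta|^2$), and check that the a priori uncontrolled quantity $|A|$ enters $\ed^2 E_h$ only through positive, quadratic-in-$|A|$ contributions (already present in $|\partial_z v|^2$, equivalently in $8\int|\eta|^2$) which absorb, via Young's inequality, the single linear-in-$|A|$ term $-h\int\langle A(\sigma,\sigma),\nu\rangle$ produced by $\ed^2 V_h$. The outcome should be a lower bound $\ed^2 E_h(u)[v,v]\geq c\int_\Sigma 4|\partial_z v|^2\id x\ed y-C(4J^2+h^2)\int_\Sigma|v|^2\id V_\Sigma$ for universal $c,C>0$, so that $i_{E_h}+n_{E_h}$ is at most the number of non-positive eigenvalues of this Schr\"odinger-type form on $\Gamma(u^\ast\T N)$.

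\textbf{Then} I would estimate that eigenvalue count by the Li--Yau/Cheng--Tysk method: take $L^2(\ed V_\Sigma)$-orthonormal sections $\phi_1,\dots,\phi_n$ spanning the non-positive eigenspace, put $\rho=\sum_i|\phi_i|^2$, and insert $\rho$ (and suitable powers of it, using Kato's inequality $\big|\nabla|\phi_i|\big|\leq|\nabla\phi_i|$) into the Michael--Simon Sobolev inequality for the branched immersion $u:\Sigma\to\R^d$. The structural fact driving the estimate is that the mean curvature vector of $u(\Sigma)$ in $\R^d$ is $\vec H=h\nu+\mathrm{tr}_{\T\Sigma}\sff$, with the two summands orthogonal (one tangent, the other normal to $N$), so that $|\vec H|^2\leq h^2+4J^2$ pointwise and hence $\int_\Sigma|\vec H|^2\id V_\Sigma\leq(4J^2+h^2)\Area(\Sigma)$. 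The Michael--Simon inequality then carries a mean-curvature term whose cumulative contribution to the count is $O\big(\int_\Sigma|\vec H|^2\big)$; combined with $\int_\Sigma(\text{potential})_+\leq C(4J^2+h^2)\Area(\Sigma)$ this should give $n\leq C'(4J^2+h^2)\Area(\Sigma)$, the rank of $u^\ast\T N$ being absorbed into $C'$, and $C'$ traceable through the (codimension-one) Michael--Simon constant.

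\textbf{The main obstacle} is this last step: unlike Cheng--Tysk's minimal setting, $u(\Sigma)$ is not minimal in $\R^d$, so one has $\|f\|_{L^2}\leq C(\|\nabla f\|_{L^1}+\||\vec H|f\|_{L^1})$ rather than the clean $\|f\|_{L^2}\leq C\|\nabla f\|_{L^1}$, and the Li--Yau iteration does not go through verbatim. I would deal with this by decomposing $\Sigma$ into $O\big(\int_\Sigma|\vec H|^2/\eps\big)$ regions on each of which $\int|\vec H|^2<\eps$ is small enough to absorb the mean-curvature term, paying an extra $O\big(\int_\Sigma|\vec H|^2\big)=O\big((4J^2+h^2)\Area(\Sigma)\big)$ in the final count --- which is exactly where the Willmore-type factor $(4J^2+h^2)\Area(\Sigma)$ enters. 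A minor further point is the neighbourhood of the branch points, where $e^{2\la}$ degenerates; there the relevant integrands stay integrable and the branch set is negligible for the Sobolev inequality.
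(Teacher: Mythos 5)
Your overall strategy coincides with the paper's: bound $\ed^2 E_h$ from below by a Schr\"odinger-type form whose potential is controlled pointwise by $4J^2+h^2$, and then count non-positive eigenvalues via the Michael--Simon Sobolev inequality for the branched immersion $u:\Sigma\to\R^d$, using that its mean curvature vector $\vec H=h\nu+\tr_{\T\Sigma}\sff$ satisfies $|\vec H|^2\leq h^2+4J^2$ pointwise. Two of your intermediate steps, however, are detours that make the argument harder than it needs to be, and the second of them rests on a misconception.

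First, the volume term. There is no need to route through $\ed^2\A_h$, \cref{thm:second-var-area-energy} and an absorption of $-h\int\langle A(\sigma,\sigma),\nu\rangle$ by quadratic-in-$|A|$ terms: the second variation of $V_h$ in the Da Lio--Gianocca--Rivi\`ere form is $\int_\Sigma h\,\omega(v,\D_{\de_x}v,u_y)+h\,\omega(v,u_x,\D_{\de_y}v)\id x\ed y$ (the $\D_v\ed\al_h$ term vanishes since $\ed\al_h=h\ed V_N$ with $h$ constant), and a single Peter--Paul inequality bounds this by $\eps h^2|v|^2+\tfrac{1}{2\eps}|\nabla v|^2$ pointwise; no second fundamental form of $\Sigma$ in $N$ ever appears. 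Your absorption claim is plausible but unverified, and it is entirely avoidable.

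Second, and more importantly, your ``main obstacle'' is not an obstacle. The Cheng--Tysk heat-kernel argument does not require $u(\Sigma)$ to be minimal in $\R^d$: it only requires the pointwise bound $|\vec H|\leq\sqrt{h^2+4J^2}$, which you yourself establish. One applies Michael--Simon to $f^2$, uses H\"older and Peter--Paul to get $\sqrt{2\pi}\,\|f\|_{L^2}^3/\|f\|_{L^1}\leq(1+\dl)\sqrt{h^2+4J^2}\int f^2+\tfrac{1}{\dl\sqrt{h^2+4J^2}}\int|\nabla f|^2$, inserts the heat kernel $f=K(x,\cdot,t)$, and integrates the resulting ODE for $K^{-1/2}(x,x,2t)$; the mean-curvature term simply produces the multiplicative factor $(h^2+4J^2)\Area(\Sigma)$ in the heat-trace bound. (Indeed Cheng--Tysk's own setting already has nonzero ambient mean curvature $\calj$ coming from $N\into\R^d$.) Your proposed decomposition of $\Sigma$ into $O(\int|\vec H|^2/\eps)$ regions of small Willmore energy is therefore unnecessary, and as sketched it is incomplete: to count eigenvalues after cutting you must pass to Neumann conditions on the pieces and justify a Sobolev/eigenvalue estimate on domains with boundary, none of which is addressed. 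The remaining rank-$3$ bookkeeping for $u^\ast\T N$ is handled in the paper by Urakawa's comparison of the bundle and scalar heat traces, which your Kato-inequality remark replaces adequately.
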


\begin{theorem}\label{thm:index-bound-curvature-assumptions}
Suppose that $u:\Sigma^2\to  N^3$ is a branched CMC $h$-immersion inside an orientable $3$-manifold $N$ whose sectional curvatures satisfy $\kappa^N\leq \kappa_0<0$.
If $h^2\leq 4|\kappa_0|$ then either
\begin{enumerate}
	\item $  i_{E_h} + n_{E_h}=0, $ or 
\item $u$ is totally umbilic, $h^2\equiv 4|\kappa_0|$, $Ric_N(\nu,\nu)\equiv -2|\kappa_0|$, $i_{\A_h}=0$ and $n_{\A_h} = 1$.
\end{enumerate}  
\end{theorem}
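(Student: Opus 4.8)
The plan is to adapt \cite[Corollary~4.1]{EM08} to the CMC setting, using that the energy comparison \cref{thm:second-var-area-energy} now holds along arbitrary conformal immersions. By \cref{thm:bound-index-cmc-energy} it is enough to control $E_h$, and I would in fact prove the stronger statement $\ed^2 E_h(u)[v,v]\ge 0$ for all $v\in\Gamma(u^\ast\T N)$ — so that $i_{E_h}=0$ — and then analyse equality. Since a branched CMC $h$-immersion is a conformal critical point of both $\A_h$ and $E_h$ for one and the same $V_h$, applying \cref{thm:second-var-area-energy} to the pair $(\A_h,E_h)$ gives, for $v=\sigma+f\nu$ with $f=\lan v,\nu\ra$ and $L=-\Dl-(|A|^2+Ric_N(\nu,\nu))$,
$$\ed^2 E_h(u)[v,v]=\int_\Sigma fLf\id\Sigma+8\int_\Sigma|\eta|^2\id x\ed y=\ed^2 E(u)[v,v]+\ed^2 V_h(u)[v,v],$$
where $\eta$ is the conformal defect \eqref{eq:infcon}, $\ed^2 E$ is as in \eqref{eq:d2E}, and the last equality holds because $V_h$ is common to $\A_h$ and $E_h$.

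I would first dispose of purely normal variations $v=f\nu$. Using $\D_\dz(f\nu)=f_z\nu-fS(u_z)$ in \eqref{eq:d2E}, where $S$ is the shape operator so $|S(u_z)|^2=\tfrac14 e^{2\la}|A|^2$, together with $Rm^N(\nu,u_z,u_\zb,\nu)=\tfrac14 e^{2\la}Ric_N(\nu,\nu)$ and the standard enclosed-volume second variation $\ed^2 V_h(u)[f\nu,f\nu]=-h^2\int_\Sigma f^2\id\Sigma$, one gets
$$\ed^2 E_h(u)[f\nu,f\nu]=\int_\Sigma|\nabla f|^2+\big(|A|^2-Ric_N(\nu,\nu)-h^2\big)f^2\id\Sigma=\int_\Sigma|\nabla f|^2+\big(|\circo{A}|^2-\tfrac{h^2}{2}-Ric_N(\nu,\nu)\big)f^2\id\Sigma$$
after invoking the surface identity $|A|^2=|\circo{A}|^2+\tfrac12 h^2$. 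Since $\kappa^N\le\kappa_0$ forces $Ric_N(\nu,\nu)\le 2\kappa_0=-2|\kappa_0|$, the potential coefficient is at least $|\circo{A}|^2-\tfrac12 h^2+2|\kappa_0|$, which is $\ge|\circo{A}|^2\ge 0$ \emph{precisely when} $h^2\le 4|\kappa_0|$; this is where the quantitative hypothesis enters. Hence $\ed^2 E_h(u)[f\nu,f\nu]\ge\int_\Sigma|\nabla f|^2+|\circo{A}|^2 f^2\id\Sigma\ge 0$.

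For a general $v=\sigma+f\nu$ I would again work from \eqref{eq:d2E}. The hypothesis $\kappa^N\le\kappa_0<0$ gives the pointwise estimate $-4Rm^N(v,u_z,u_\zb,v)\ge|\kappa_0|\big(|\sigma|^2+2f^2\big)e^{2\la}$ (split $Rm^N(v,u_z,u_\zb,v)=\tfrac14(Rm^N(v,u_x,u_x,v)+Rm^N(v,u_y,u_y,v))$, bound $-\kappa^N\ge|\kappa_0|$, use Cauchy--Schwarz), and the splitting $4|\D_\dz v|^2=4|\D_\dz^\bot s+A(u_z,\sigma)|^2+4|\nabla^\Sigma_{u_z}\sigma-fS(u_z)|^2$ still exposes the favourable term $4f^2|S(u_z)|^2=f^2 e^{2\la}|A|^2$. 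Using also $\ed^2 V_h(u)[v,v]=-h\int_\Sigma\big(2\lan\sigma,\nabla f\ra+\lan A(\sigma,\sigma),\nu\ra+hf^2\big)\id\Sigma$, the $f$--$f$ block again produces the nonnegative expression from the previous paragraph, and what is left is to absorb the indefinite pieces $-h\lan A(\sigma,\sigma),\nu\ra$, $-2h\lan\sigma,\nabla f\ra$ and the cross terms of $|\D_\dz v|^2$ into the nonnegative terms available: the squares $4|A(u_z,\sigma)|^2$, $4|\nabla^\Sigma_{u_z}\sigma|^2$, $|\circo{A}|^2 f^2$, $|\nabla f|^2$; the term $8\int_\Sigma|\D^\top_\dz\sigma^{0,1}|^2$ surfacing from $8\int_\Sigma|\eta|^2$; and the curvature reserve $|\kappa_0|\int_\Sigma|\sigma|^2$. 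This completion of squares is the step I expect to be the main obstacle: in the minimal case of \cite{EM08} it was unnecessary, since under $\kappa^N\le 0$ the integrand of \eqref{eq:d2E} is already manifestly a sum of squares, whereas here the indefinite contribution of $\ed^2 V_h$ must be dominated without exhausting the reserve $|\kappa_0|\int_\Sigma|\sigma|^2$. The most delicate cross term pairs $\D^\top_\dz\sigma^{0,1}$ with the Hopf-type quantity $\lan A(u_z,u_z),\nu\ra$, and is handled by an integration by parts using the Codazzi equation for the CMC surface; as in \cite{EM08}, a $W^{1,2}$-capacity cutoff near the branch points is needed to legitimise these integrations by parts.

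Finally I would analyse equality. If $\ed^2 E_h(u)[v,v]=0$ with $v\ne 0$, then tracing back through the inequalities forces $\sigma\equiv 0$ and $f$ constant; hence either $f\equiv 0$, giving the contradiction $v\equiv 0$, or $f$ is a nonzero constant, and then the equality cases of the normal-variation estimate give $|\circo{A}|\equiv 0$, $h^2\equiv 4|\kappa_0|$ and $Ric_N(\nu,\nu)\equiv -2|\kappa_0|$ on all of $\Sigma$. In that situation $u$ is totally umbilic and $|A|^2+Ric_N(\nu,\nu)=\tfrac12 h^2-2|\kappa_0|=0$, so $L=-\Dl$ and $\ed^2\A_h(u)[f,f]=\int_\Sigma|\nabla f|^2\id\Sigma$; thus $i_{\A_h}=\indx(-\Dl)=0$ and $n_{\A_h}=\nullity(-\Dl)=1$, which is alternative~(2). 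If no such $v\ne 0$ exists, then $\ed^2 E_h(u)$ is positive definite and $i_{E_h}+n_{E_h}=0$, which is alternative~(1).
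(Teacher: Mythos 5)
Your reduction to $E_h$ via \cref{thm:bound-index-cmc-energy}, your computation for purely normal variations, and your endgame (equality forces $u$ totally umbilic with $L=-\Dl$, whence $i_{\A_h}=0$, $n_{\A_h}=1$) all align with the paper. The problem is that the heart of the matter --- nonnegativity of $\ed^2 E_h(u)[v,v]$ when $v=\sigma+f\nu$ has $\sigma\not\equiv 0$ --- is not actually proved: you inventory the positive terms and the indefinite cross terms and then defer the required completion of squares (plus a Codazzi integration by parts and a capacity cutoff at the branch points) as ``the main obstacle.'' Since that absorption is exactly where the hypothesis $h^2\le 4|\kappa_0|$ must be shown to suffice for non-normal variations, the proposal as written has a genuine gap, not merely a missing routine verification.

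The gap is also self-inflicted, because the paper's proof never touches \cref{thm:second-var-area-energy}, the identity $\ed^2 E_h[v,v]=\int fLf+8\int|\eta|^2$, or any splitting of $|\D_\dz v|^2$. It works directly with
\[
\ed^2 E_h(u)[v,v]=\int_\Si |\D v|^2-e^{-2\la}\rbrak{\Rm(v,u_x,u_x,v)+\Rm(v,u_y,u_y,v)}+e^{-2\la}h\rbrak{\om(v,\D_{\de_x}v,u_y)+\om(v,u_x,\D_{\de_y}v)}\id\Si,
\]
bounds the curvature term below by $2|\kappa_0|\abs{v}^2$, and absorbs the whole volume term with Peter--Paul \eqref{eq:pp} at $\eps=1/2$, spending all of $\int|\D v|^2$ to arrive at $\ed^2 E_h[v,v]\ge\int_\Si(2|\kappa_0|-\tfrac12 h^2)|v|^2\id\Si$; no decomposition of $v$ and no integration by parts is needed. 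I will note that your sharper curvature accounting --- the reserve is really $|\kappa_0|(|\sigma|^2+2f^2)$ rather than $2|\kappa_0|(|\sigma|^2+f^2)$, since $\Rm(v,e_i,e_i,v)$ only sees the component of $v$ orthogonal to $e_i$ --- is correct and puts your finger on exactly why the tangential directions are the delicate ones; but observing this does not discharge the absorption you left undone. Either carry out that completion of squares in full, or drop the decomposition and estimate $\ed^2 E_h$ on all of $v$ at once as the paper does.
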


\subsection{Proofs of Theorems \ref{thm:energy-bound} and \ref{thm:index-bound-curvature-assumptions}}

\begin{proof}[Proof of Theorem \ref{thm:energy-bound}]
As derived in \cite[\textrm{II}.1.2]{dalio-gianocca-riviere-2023} we may write 
$$\ed^2 V_h (u)[v,v]:=\int_\Sigma \ed \al_h(v,\D_{\de_x}v,u_y) + \ed \al_h (v,u_x,\D_{\de_y}v) + \rbrak{\D_v \ed \al_h }(v,u_x,u_y)\id x\ed y$$
and since $\ed \al_h = h \ed V_N$ the last term vanishes. 

For simplicity let $\om = \ed V_N$. Using the Peter-Paul inequality, we have, for any $\eps>0$,
\begin{align}
|e^{-2\la} h(\om(v,\D_{\de_x}v,u_y) + &\om(v,u_x,\D_{\de_y}v))| \leq
	h\rbrak{e^{-\la}\abs{\D_{\de_x}v}\abs{v}e^{-\la}\abs{u_y}
		+ e^{-\la}\abs{\D_{\de_y}v}\abs{v}e^{-\la}\abs{u_x}}  \nn \\
		&\leq \frac{\eps}{2}h^2\abs{v}^2 + \frac{1}{2\eps}e^{-2\la}\abs{\D_{\de_x}v}^2
			+ \frac{\eps}{2}h^2\abs{v}^2 + \frac{1}{2\eps}e^{-2\la}\abs{\D_{\de_y}v}^2  \nn \\
		&= \eps h^2\abs{v}^2 + \frac{1}{2\eps}\abs{\nabla v}^2. \label{eq:pp}
\end{align}
Therefore the second variation of energy may be written 
\begin{align*}
	\secondvar{E_h}{u}{v,v} &= \int_\Si \abs{\nabla v}^2 - \Rm(v,e^{-\la}u_x,e^{-\la}u_x,v) - \Rm(v,e^{-\la}u_y,e^{-\la}u_y,v) \\
	&\qquad\qquad + h\rbrak{\om(v,\nabla_{e^{-\la}\pd_x}v,e^{-\la}u_y) + \om(v,e^{-\la}u_x,\nabla_{e^{-\la}\pd_y}v)}
	\id\Si  \\
	& \geq \fr12 \int_\Sigma |\D v|^2 - 2\Rm(v,e^{-\la}u_x,e^{-\la}u_x,v) - 2\Rm(v,e^{-\la}u_y,e^{-\la}u_y,v) - 2h^2 |v|^2 \id \Sigma
\end{align*}
by \eqref{eq:pp} with $\eps =1$. 

We now note that by the Gauss equations:  
	$$\sup_{p\in\Si} \sup_{\substack{X,Y\in T_{u(p)}N,\\ \abs{X}=\abs{Y}=1}}\Rm(X,Y,Y,X) \leq J^2 $$
	which leaves
\begin{align*}
	\secondvar{E_h}{u}{v,v} &\geq \fr12\int_\Sigma |\D v|^2 - (4J^2 + 2h^2)|v|^2 \id \Sigma  \\
	&= \frac{1}{2}\int_\Si \inner{(-\overline{\Delta}- 4J^2 - 2h^2)v,v} \id\Si
\end{align*}
where $\overline{\Delta}$ is the rough Laplacian in $u^\ast \T N$.

This estimate immediately leads to
\begin{equation}\label{ineq:index-nullity-leq-count}
i_{E_h} + n_{E_h} \leq \indx(-\overline{\Delta}-4J^2 - 2h^2) + \nullity(-\overline{\Delta}-4J^2 - 2h^2) \leq \#\cbrak{\overline{\la}\leq 4J^2 +2 h^2},
\end{equation}
where $\ov\la$ are the eigenvalues of $-\overline{\Delta}$ (counted with multiplicity).

Now, for all $t>0$, we have
\begin{equation}\label{ineq:evals-leq-bar-heat}
\#\cbrak{\overline{\la}\leq 4J^2 + 2h^2}\leq \sum_i e^{-t(\overline{\la}_i-(4J^2 + 2h^2))} = e^{(4J^2 + 2h^2)t}\overline{k}(t),
\end{equation}
where $\overline{k}(t) = \sum_i e^{-\overline{\la}_i t}$ is the trace of the heat kernel of $-\overline{\Delta}$.

Similarly, the trace of the heat kernel related to the scalar Laplacian $-\Delta$ on $\Sigma$, is $ k(t) = \sum_i e^{-\la_i t}$ where $\{\la_i\}$ are the eigenvalues of $-\Delta$.

Now, by Theorem 2.1 of \cite{urakawa-1987}, we have that $\overline{k}(t)\leq3k(t)$ for all $t>0$, and so keeping in mind the estimates (\ref{ineq:index-nullity-leq-count}) and (\ref{ineq:evals-leq-bar-heat}), we have
\begin{equation}\label{ineq:index-nullity-leq-inf}
i_{E_h} + n_{E_h} \leq 3 \inf_{t>0} e^{(4J^2 + 2h^2)t}k(t).
\end{equation}

At this point we are required to estimate the heat kernel of $-\Delta$, and we do this \via\ an argument of Cheng--Tysk \cite[pp.~991 -- 993]{CT94}. We provide details since we  wish to keep track of constants more precisely.  

View $u$ as a map $\Si\to\R^d$, whose mean curvature is $\vec{H}:=H+\calj$, where $\calj=\tr_\Si\sff$ ($\sff$ is the second fundamental form of $N$ in $\R^d$) and $H$ is of course the mean curvature of $\Si$ in $N$: note that $|\vec{H}| = \sqrt{|H|^2 + |\calj|^2} \leq  \sqrt{h^2 + 4J^2}$. 

Applying the Michael-Simon-Sobolev inequality to a general $f\in W^{1,1}(\Si)$, we have
$$ \rbrak{\int_\Si f^2}^{1/2} \leq \frac{1}{\sqrt{2\pi}}\int_\Si\rbrak{\abs{\nabla f} + \sqrt{h^2 + 4J^2}|f|}. $$
Replace $f$ with $f^2$; an application of H\"older's inequality and the Peter-Paul inequality yields 
$$ \rbrak{2\pi\int_\Si f^4}^{1/2} \leq \int_\Si\rbrak{|\nabla f^2| + \sqrt{h^2 + 4J^2}f^2} \leq (1+\dl)\sqrt{h^2 + 4J^2}\int_\Si f^2 + \frac{1}{\dl\sqrt{h^2 + 4J^2}}\int_\Si\abs{\nabla f}^2. $$
	
An interpolation inequality ($\|f\|_{L^2}^3\leq \|f\|_{L^4}^{2}\|f\|_{L^1}$)  yields, for any $\dl >0$: 
\begin{equation}\label{ineq:cheng-tysk-sobolev}
\sqrt{2\pi}\frac{\rbrak{\int_\Si f^2}^{3/2}}{\int_\Si\abs{f}} \leq (1+\dl)\sqrt{h^2 + 4J^2}\int_\Si f^2 + \frac{1}{\dl\sqrt{h^2 + 4J^2}}\int_\Si\abs{\nabla f}^2.
\end{equation}
Now, consider $K(x,y,t)$ the heat kernel of the Laplace-Beltrami operator on $\Si$.
From, for example, \cite[p.~103]{schoen-yau-1994}, we know that
$$ K(x,y,t) = \sum_i e^{-\la_i t}\phi_i(x)\phi_i(y),
\qquad \lim_{t\downarrow0} K(x,y,t) = \delta_x(y)
$$
and that $K>0$ (see, for instance, \cite[Lemma~1, p.~99]{schoen-yau-1994}), where $\la_i$ are the eigenvalues of the Laplace-Beltrami operator on $\Si$ counted with multiplicity, and $\phi_i$ are the corresponding $L^2$-orthonormal eigenfunctions.
Moreover, since $\pd_t\int_\Si\!K(x,y,t)\id{y}=\int_\Si\!\Delta K(x,y,t)\id{y}=0$, we have that $\int_\Si\abs{K(x,y,t)}\id{y}=\lim_{t\downarrow0}\int_\Si K(x,y,t)\id{y}=1$.
Setting therefore $f(y):=K(x,y,t)$ in the estimate (\ref{ineq:cheng-tysk-sobolev}) we have
$$ \sqrt{2\pi}\rbrak{\int_\Si K^2\dform{y}}^{3/2} \leq (1+\dl)\sqrt{h^2 + 4J^2}\int_\Si K^2\dform{y} +\frac{1}{\dl\sqrt{h^2 + 4J^2}} \int_\Si \abs{\nabla_y K}^2 \id{y}. $$
But $K(x,x,t)=\int_\Si K(x,y,t/2)^2\id{y}$, and so
\begin{align*}
	\partial_tK(x,x,t) &= \int_\Si K(x,y,t/2)\rbrak{\partial_t K}(x,y,t/2)\id{y} \\
		&= \int_\Si K(x,y,t/2)\rbrak{\Delta_y K}(x,y,t/2) \id{y} \\
		&= -\int_\Si\abs{\nabla_y K(x,y,t/2)}^2 \id{y}.
\end{align*}
Therefore,
$$ \sqrt{2\pi}\dl\sqrt{h^2 + 4J^2}K(x,x,t)^{3/2} \leq \dl(1+\dl)(h^2 + 4J^2)K(x,x,t) - \partial_t K(x,x,t). $$

Setting $\phi(t) = K^{-1/2}(x,x,t)$ yields the ordinary differential inequality
$$  \sqrt{\pi}\dl\sqrt{\frac{1}{2}\rbrak{h^2 + 4J^2}} \leq \dl(1+\dl)\frac{1}{2}(h^2 + 4J^2)\phi + \phi', $$
the solution of which can be obtained by integration (noting that $\phi(0)=0)$, yielding, with $\al = \fr12(h^2 + 4J^2)$
$$ \frac{\sqrt{2\pi}}{(1+\dl)\sqrt{h^2 + 4J^2}}\frac{e^{\dl(1+\dl)\al t} - 1}{e^{\dl(1+\dl)\al t}}\leq \phi(t). $$
Therefore, replacing $\phi$ with its expression in terms of the heat kernel, and noting that $k(t)=\int_\Si K(x,x,t)\id{x}$, gives
$$ k(t) \leq \frac{(1+\dl)^2}{2\pi}\Area(\Si)(h^2 + 4J^2)\cdot\rbrak{\frac{e^{\dl(1+\dl)\al t} }{e^{\dl(1+\dl)\al t}-1}}^2. $$
Using this in the inequality (\ref{ineq:index-nullity-leq-inf}), we obtain
\begin{align}
	i_{E_h} + n_{E_h} &\leq \frac{3(1+\dl)^2}{2\pi}\Area(\Si)(h^2 + 4J^2) \inf_{t>0}\rbrak{\frac{e^{(\dl(1+\dl) + 2)\al t} }{e^{\dl(1+\dl)\al t}-1}}^2 \nonumber\\
	&= \frac{3}{2\pi}\Area(\Si)(h^2 + 4J^2) \fr{(\dl(1+\dl)+2)^{2+ \fr{4}{\dl(1+\dl)}}}{\dl^2 2^{\fr{4}{\dl(1+\dl)}}} \nn,
\end{align}
for any $\dl >0$. 

The expression in $\dl$ is less than $40$ when $\dl \approx 2.3$ and so we have an upper bound of the form: 
$$i_{E_h} + n_{E_h} \leq \fr{60}{\pi}\Area(\Si)(h^2 + 4J^2).$$ 

\end{proof}

\begin{proof}[Proof of Theorem  \ref{thm:index-bound-curvature-assumptions}]
 Again letting $\ed V_N = \om$
\begin{multline}
\secondvar{E_h}{u}{v,v} = \int_\Si \abs{\nabla v}^2 - e^{-2\la}\rbrak{\Rm(v,u_x,u_x,v) + \Rm(v,u_y,u_y,v)} \\ + e^{-2\la} h\rbrak{\om(v,\D_{\de_x}v,u_y) + \om(v,u_x,\D_{\de_y}v)} \id\Si \nn 
\end{multline}
Now, the fact that any sectional curvature $\kappa^N\leq \kappa_0<0$ gives that
$$ -e^{-2\la}\rbrak{\Rm(v,u_x,u_x,v) + \Rm(v,u_y,u_y,v)} \geq 2|\kappa_0|\abs{v}^2. $$
Setting $\eps=1/2$ in \eqref{eq:pp}  yields the estimate
$$ \secondvar{E_h}{u}{v,v} \geq  \int_\Si \rbrak{2|\kappa_0| - \frac{1}{2}h^2}\abs{v}^2\id\Si, $$
whence if $h^2\leq 4|\kappa_0|$, $\dform^2{E_h}(u)\geq0$. If $\ed^2 E_h>0$ we clearly have conclusion 1. 

On the other hand if there exists $v$ so that $\ed^2 E_h(u)[v,v]=0$, following the equality case back through the proof (and also in \eqref{eq:pp})
 we see that $h^2=4|\kappa_0|$ and up to constant $v=\nu$, $\D_{\de_x} \nu = -|\kappa_0|^\fr12 u_x$, $\D_{\de_y} \nu = -|\kappa_0|^\fr12 u_y$  and of course $Ric(\nu,\nu) \equiv -2|\kappa_0|$. This easily yields that $u$ is totally umbilic ($A(u_z,u_z) \equiv 0$) and that $|A|^2 \equiv 2|\kappa_0|$. In particular $\ed^2 \A_h(u)[1,1] = 0$ and so we must have $n_{\A_h}=1$ and $i_{\A_h} = 0$.   
\end{proof}

%--------------APPENDIX-----------------------------------------

\appendix 

\section{Area, conformally invariant variational problems, enclosed volume and their variations}\label{app:full-sec-var-area}
We first derive the full second variation of area along a (potentially) non-minimal immersion and along arbitrary (non-normal) variations. Of course the result holds under far fewer assumptions on $u$ and its variation $v$ but for simplicity we state the results for smooth immersions and variations. 
\begin{theorem}\label{thm:app1} 
	Let $u:\Sigma^m\to N^n$ be a smooth immersion from a closed manifold into some Riemannian manifold $(N,g)$. Suppose that $u(x,t):\Sigma\times (-\dl,\dl)\to N$ is a smooth one-parameter family of immersions and we denote $v=u_t(x,0)\in \Gamma(u^\ast\T N)$ which we decompose via $v=\sigma + s$ where $s=v^\bot\in \Gamma(u^\ast \mathcal{V}\Sigma)$ and $\sigma \in \Gamma(\T\Sigma)$, then 
	\begin{eqnarray*}
\ed^2 \A^m (\Sigma)[v,v] &=& \int_{\Sigma} |\Db s|^2 -\sum_{i,j} \lan A(E_i,E_j), s \ra  ^2 - \sum_i Rm^N(s,E_i,E_i,s) + \lan s, H\ra^2 \\ 
&&    	\qquad + \lan A(\sigma,\sigma), H\ra + 2 \lan \Db_\sigma s, H\ra \id \Sigma,
\end{eqnarray*}
where $\{E_i\}$ is an arbitrary orthonormal frame for $\T \Sigma$. 
\end{theorem}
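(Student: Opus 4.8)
The plan is to compute $\ed^2\A^m(\Sigma)[v,v] = \frac{d^2}{dt^2}\Big|_{t=0}\int_\Sigma \ed V_{g(t)}$ where $g(t) = u(\cdot,t)^\ast g$ is the pulled-back metric, by differentiating the volume form twice in $t$ and carefully handling the tangential part $\sigma$ of the variation. First I would recall that $\frac{d}{dt}\ed V_{g(t)} = \frac12 \tr_{g(t)}(\dot g(t))\,\ed V_{g(t)}$, so that
\begin{equation*}
\ed^2\A^m(\Sigma)[v,v] = \int_\Sigma \left( \frac12 \tr_g \ddot g - \frac12 \lan \dot g, \dot g\ra_g + \frac14 (\tr_g \dot g)^2 \right)\ed V_g,
\end{equation*}
where all quantities are evaluated at $t=0$ and $\lan \cdot,\cdot\ra_g$ is the induced inner product on symmetric $2$-tensors. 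The next step is to express $\dot g$ and $\ddot g$ in terms of $v = \sigma + s$ and the geometry of the immersion: in an orthonormal frame $\{E_i\}$ one has $\dot g_{ij} = \lan \D_{E_i} v, E_j\ra + \lan E_i, \D_{E_j} v\ra$, and differentiating once more (using that $\frac{D}{\partial t} u_\ast E_i = \D_{E_i} v$, the torsion-freeness of $\D$, and the definition of $Rm^N$) produces $\ddot g_{ij}$ with a curvature term $-2Rm^N(v,E_i,E_j,v)$ plus terms quadratic in $\D v$ and a term involving the acceleration $\D_v v$ of the variation.

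The key manipulation is then to substitute the orthogonal decomposition $v = \sigma + s$ everywhere and use the Weingarten/Gauss relations: $\D_{E_i} s = \Db_{E_i} s - A_s(E_i)$ where $\lan A_s(E_i), E_j\ra = \lan A(E_i,E_j), s\ra$, together with $\D_{E_i}\sigma = \D^\top_{E_i}\sigma + A(E_i,\sigma)$. One sees that the acceleration term $\int_\Sigma \frac12 \tr_g(\D_v v \text{-contribution})$ combines with the tangential pieces; since $\A^m$ is diffeomorphism-invariant, the genuinely tangential part of $\sigma$ contributes only through its interaction with the normal motion, which is exactly how the cross-terms $\lan A(\sigma,\sigma),H\ra$ and $2\lan\Db_\sigma s, H\ra$ arise (these are the terms that vanish when $H=0$ or when $\sigma=0$). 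The purely normal part reproduces the classical Simons-type integrand $\int_\Sigma |\Db s|^2 - \sum_{i,j}\lan A(E_i,E_j),s\ra^2 - \sum_i Rm^N(s,E_i,E_i,s) + \lan s,H\ra^2$; here the $\lan s,H\ra^2$ comes from the $\frac14(\tr_g\dot g)^2$ term while the first variation of area $\int_\Sigma \frac12 \tr_g \dot g = -\int_\Sigma \lan v,H\ra$ being differentiated again contributes the rest after an integration by parts converting $\tr_g \ddot g$ appropriately.

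I expect the main obstacle to be bookkeeping the tangential variation $\sigma$ correctly: one must be careful that $\sigma$ is not assumed divergence-free or Killing, so the cross-terms do not simply integrate away, and one needs to track which contributions come from $\ddot g$ versus the square of $\dot g$ versus the reparametrization. A clean way to organize this is to first prove the formula for $v = s$ purely normal (the standard computation), then handle a purely tangential $v=\sigma$ by writing it as the infinitesimal generator of a flow of diffeomorphisms and using invariance of $\A^m$ to see that $\ed^2\A^m(\Sigma)[\sigma,\sigma] = \ed\A^m(\Sigma)[\D_\sigma \sigma] = -\int_\Sigma \lan \D_\sigma\sigma, H\ra$ — wait, more precisely one computes directly and checks it equals $\int_\Sigma \lan A(\sigma,\sigma),H\ra + (\text{terms that reassemble})$ — and finally treat the mixed term $2\,\ed^2\A^m(\Sigma)[\sigma,s]$ via polarization, which yields $2\lan\Db_\sigma s,H\ra$ after integration by parts. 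Assembling the three pieces gives the stated formula; throughout, the identity $\frac14(\tr_g \dot g)^2 - \frac12|\dot g|_g^2$ restricted to the normal directions is what isolates the $\lan s,H\ra^2$ term with the correct sign.
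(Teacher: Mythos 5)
Your proposal follows essentially the same route as the paper: a second-order Taylor expansion of the volume element giving $\int_\Sigma \left(\tfrac12\tr_g\ddot g - \tfrac12|\dot g|^2_g + \tfrac14(\tr_g\dot g)^2\right)\ed\Sigma$, followed by the Gauss--Weingarten decomposition of $v=\sigma+s$ and a regrouping into normal, tangential and mixed blocks (the paper does this regrouping by direct computation, quoting \cite{ACS18} for the $\sigma\sigma$ and $\sigma s$ identities, rather than by diffeomorphism invariance plus polarization, but the content is the same). One point to tighten: since $u$ is not assumed minimal, the raw derivative $\partial_t^2\A^m$ depends on the acceleration $u_{tt}(\cdot,0)$, so your opening display is the Hessian only after subtracting $\ed\A^m(u)[u_{tt}]$, which exactly cancels the $\textrm{div}_\Sigma(u_{tt})$ term hiding inside $\tr_g\ddot g$ --- it does not ``combine with the tangential pieces''. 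This is also where the sign in your heuristic flips: invariance of $\A^m$ under the flow of $\sigma$ gives $\ed^2\A^m(u)[\sigma,\sigma]=-\,\ed\A^m(u)[\D_\sigma\sigma]=\int_\Sigma\lan A(\sigma,\sigma),H\ra\id\Sigma$, which is the sign you ultimately need.
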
 

\begin{proof}
	We follow both \cite{cm40} and \cite[Appendix A]{ACS18} in order to derive the full Hessian of the area functional along such $u$. We equip $\Sigma$ with the pulled-back metric $u^\ast g$, which we still denote by $g$, making $u$ an isometric immersion. The $g$-inner product is denoted $\lan\cdot,\cdot\ra $ and the induced volume form is denoted $\ed \Sigma$. First given any local coordinates $\{x^i\}$ on $\Sigma$ we may define 
$$g_{ij}(t)=\lan u_i, u_j\ra \quad \text{where $u_i=\de_{x^i} u(x,t)$}$$
and 
$$G(t)=\sqrt{\det g^{-1}(0)g(t)}$$
which we note is well-defined independently of the choice of local coordinates. 

The area formula gives 
$$\A^m(u(\cdot,t))=\int_\Sigma G(t)\id \Sigma.$$

We note the following useful facts for the proceeding computations, where now $\D$ denotes the pulled back Levi-Civita connection by $u(x,t)$: 
 The $t$ and $x^i$ derivatives commute giving (here and throughout we use the short-hand $u_{tt}= \nabla_{u_t} u_t$),
 $$\de_t g_{ij}(t)=\lan \D_{u_i} u_t, u_j\ra + \lan u_i, \D_{u_j} u_t\ra \quad\text{and}\quad  
 \lan\D_{u_t}\D_{u_i} u_t, u_j\ra=\lan \D_{u_i} u_{tt}, u_j\ra - Rm^N(u_t,u_i,u_j,u_t),$$
 which together imply that, taking a Taylor expansion about $t=0$,
 $$g_{ij}(t) = g_{ij}(0) + tA +  t^2 B + O(t^3)$$
where 
$$A = \de_t g_{ij}(0) = \lan \D_{u_i} v, u_j\ra + \lan u_i, \D_{u_j} v\ra$$
and
$$B = \fr12\de_{tt}g_{ij}(0)= \lan \D_{u_i} v, \D_{u_j} v\ra + \fr12\lan \D_{u_i} u_{tt}, u_j\ra  + \fr12\lan u_i, \D_{u_j} u_{tt}\ra - Rm^N(v,u_i,u_j,v).$$
 
In particular, choosing coordinates $\{x^i\}$ so that $\{u_i= E_i\}$ is an orthonormal basis at a point $p$ when $t=0$, and using that 
$$\sqrt{\det (I+tA +  t^2 B + O(t^3))} = 1+\fr{t}{2}\textrm{tr}(A) + \fr{t^2}{2}\left(\textrm{tr}(B) + \fr14(\textrm{tr}(A))^2 - \fr12 \textrm{tr}(A^2)\right) + O(t^3)$$
we have that, at the point $p$:
\begin{eqnarray*}
 G(t) &=& 1 + t\textrm{div}_{\Sigma}(v) + \\
 &&\fr{t^2}{2}\left(|(\D v)^{\bot}|^2 + \textrm{div}_{\Sigma}(u_{tt}) - \sum_{i}Rm^N(v,E_i,E_i,v)+ \right. \\
 &&
\qquad \qquad \qquad  \left.
 + (\textrm{div}_{\Sigma}(v))^2- \sum_{ij}\lan \D_{E_i} v,E_j\ra \lan \D_{E_i} v,E_j\ra\right) + O(t^3).	 \end{eqnarray*}
 As is standard, setting $\textrm{div}_{\Sigma}(w) = \sum_{i} \lan \D_{E_i} w, E_i\ra$ for any $w\in u^\ast \T N$ and orthonormal frame $\{E_i\}$ we have that $\textrm{div}_\Sigma(v)=\textrm{div}_\Sigma(\sigma) - \lan s, H\ra$ where now $\textrm{div}_\Sigma(\sigma)$ can be interpreted as an intrinsic divergence along $\Sigma$ itself. 
Clearly   \begin{eqnarray*}
 	\ed \A^m(u)[v] &=& \pl{}{t}\vlinesub{t=0}\A^m(u(\cdot,t)) = \int_{\Sigma} \textrm{div}_{\Sigma}(v) \id \Sigma = -\int_\Sigma \lan H, s\ra \id \Sigma .
 \end{eqnarray*}
 The full Hessian is therefore: 
 \begin{eqnarray*}
 	\ed^2 \A^m(u)[v,v] &=& \pl{^2}{t^2}\vlinesub{t=0}\A^m(u(\cdot,t)) - \ed \A^m(u)[u_{tt}(\cdot,0)] \\
 	&=& \int_\Sigma |(\D v)^{\bot}|^2  - \sum_{i}Rm^N(v,E_i,E_i,v) +(\textrm{div}_{\Sigma}(v))^2- \sum_{i,j}\lan \D_{E_i} v,E_j\ra \lan \D_{E_i} v,E_j\ra \id \Sigma \\
 	&=& \int_{\Sigma} |\D^\bot s|^2 - \sum_{i,j} \lan A(E_i,E_j),s\ra^2 - \sum_i Rm^N(s,E_i,E_i,s) + \lan s, H\ra^2 + \mathcal{B} + \mathcal{C} \id \Sigma 
 \end{eqnarray*}
 where 
 $$\mathcal{B}=\sum_i|A(E_i,\sigma)|^2  - Rm^N(\sigma,E_i,E_i,\sigma) + (\textrm{div}_{\Sigma}(\sigma))^2 - \sum_{i,j}\lan \D_{E_i} \sigma,E_j\ra \lan \D_{E_i} \sigma,E_j\ra$$
 and 
 $$\mathcal{C} = 2\sum_{i}\lan \D^\bot_{E_i} s, A(E_i,\sigma)\ra  - 2Rm^N(\sigma,E_i,E_i,v) - 2\textrm{div}_{\Sigma}(\sigma)\lan s, H\ra +2\sum_i \lan A(\D^\top_{E_i} \sigma , E_i), s\ra.$$
 
 Directly from \cite[Appendix A]{ACS18} we see that 
 $$\mathcal{C}=\textrm{div}_{\Sigma}(\om)  + 2\lan \Db_\sigma s, H\ra   \quad \text{and} \quad \mathcal{B} = \lan A(\sigma,\sigma), H\ra  + \textrm{div}_{\Sigma}(\textrm{div}_{\Sigma}(\sigma)\sigma - \D^\top_{\sigma} \sigma)$$
 where $\om$ is a one-form defined along $\Sigma$ by $\om(\xi) = 2\lan A(\sigma,\xi),s\ra - 2\lan s, H\ra \lan \sigma, \xi\ra $ and we arrive at the desired formula.  
\end{proof}

\subsection{Conformally invariant variational problems and enclosed volume} 

Let $(N^{n+1},g)$ be an arbitrary smooth Riemannian manifold equipped with a two-form $\al \in \Gamma(\wedge^2\T^\ast N)$. For $u:\Sigma \to N$ and $\Sigma$ a closed orientable surface, let 
\begin{equation}
V_\al(u) : =\int_\Sigma u^\ast \al.
\end{equation}

If $u_t$ is a one-parameter family of variations with $\de_t u_t \vline_{t=0} = v\in \Gamma (u^\ast \T N)$, we have (see e.g. \cite{dalio-gianocca-riviere-2023}) 
$$\ed V_\al (u)[v] = \pl{}{t}\vlinesub{t=0}V_\al(u_t) = \int_\Sigma \ed \al (v,u_x,u_y)\id x\ed y$$
where $\{x,y\}$ is a local choice of oriented coordinates and one may straightforwardly check that the above integral is independent of coordinates and globally well-defined.

The second variation has been derived in \cite{dalio-gianocca-riviere-2023} and may be written: 
$$\ed^2 V_\al (u)[v,v]:=\int_\Sigma \ed \al(v,v_x,u_y) + \ed \al (v,u_x,v_y) + \rbrak{\D_v \ed \al }(v,u_x,u_y)\id x\ed y.$$

If $N$ is an orientable three-manifold then $\ed \al = \h \ed V_{N}$ for some smooth function $\h$ on $N$. In this case, we may write $E_\h$ and $V_\h$ rather than  $E_\al$ and $V_\al$. In the case that $u$ is an immersion we have  
$$ \ed V_\al(u)[v] = \int_\Sigma \lan v, \h \nu \ra \id \Sigma $$
where $\nu$ is a unit vector such that $(u_x, u_y, \nu)$ is positively oriented in $N$ and $\ed \Sigma$ is the induced area on $\Sigma$. 

In particular if $u:\Sigma \to N$ satisfies $u^{-1}(B_\eps(x)) = \emptyset$ then defining $\al_h$ via \eqref{eq:alH} we have: $u$ is a CMC $h$-immersion if and only if  
$$\ed\A_h(u)[v] = - \int_\Sigma \lan v, H\ra \id \Sigma + \int_\Sigma \lan v, h\nu\ra \id \Sigma =0 \quad\text{for all $v\in \Gamma(u^\ast \T N)$.}$$
\begin{lemma}\label{lem}
Suppose that $N$ is an orientable three-manifold equipped with a two-form $\al \in \Gamma(\wedge^2\T^\ast N)$.
Then, writing $\dform\al = \h\dform V_{N}$, we have that for any smooth possibly branched immersion $u:\Sigma \to N$,
\begin{multline}
\secondvar{V_\al}{u}{v,v} = \int_\Si -\inner{s,\h\nu}\inner{s,H} - 2\inner{\Db_\sigma  s,\h\nu} - \inner{A(\sigma,\sigma),\h\nu} + \inner{s,\nu}\nabla_s\h \id\Si
\end{multline}
where we employ the usual decomposition $v=\sigma + s$ into the tangential and normal components, $\sigma$ and $s$ respectively, of $v$. 
\end{lemma}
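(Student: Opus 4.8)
The plan is to substitute $\ed\al=\h\,\ed V_N$ into the second-variation formula for $V_\al$ recalled just above, to expand every term in the splitting $v=\sigma+s$ by means of the Gauss and Weingarten equations, and then to integrate by parts over the closed surface $\Sigma$ to discard exact terms (the computation being carried out away from the branch points, a discrete set, and the resulting pointwise identity extending by continuity). Write $\om:=\ed V_N$ and $f:=\inner{s,\nu}$, so $s=f\nu$, and work in isothermal coordinates with $(u_x,u_y,\nu)$ positively oriented, so that $\om(u_x,u_y,\nu)\,\ed x\,\ed y=\ed\Si$. For the last term of the formula, the Riemannian volume form is parallel, so $\D_v(\h\om)=(\nabla_v\h)\om$ and $(\D_v\ed\al)(v,u_x,u_y)=(\nabla_v\h)\,\om(v,u_x,u_y)$; as $\om$ is alternating and the tangent plane is two-dimensional, only the $\nu$-component of the first slot survives, giving the contribution $\int_\Si(\nabla_v\h)\inner{s,\nu}\,\ed\Si=\int_\Si(\nabla_\sigma\h+\nabla_s\h)\inner{s,\nu}\,\ed\Si$. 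Here $\inner{s,\nu}\nabla_s\h$ is already one of the asserted terms, and I carry the remaining piece $(\nabla_\sigma\h)\inner{s,\nu}$ forward.

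Next I expand $\ed\al(v,\D_{\de_x}v,u_y)+\ed\al(v,u_x,\D_{\de_y}v)=\h\big(\om(v,\D_{\de_x}v,u_y)+\om(v,u_x,\D_{\de_y}v)\big)$. Since $\D_{\de_x}\nu$ is tangent (Weingarten), one has $\D_{\de_x}v=\big(\D^\top_{\de_x}\sigma+f\,\D_{\de_x}\nu\big)+\big(A(u_x,\sigma)+(\pd_x f)\nu\big)$, its tangential part followed by its normal ($\nu$-)part, and likewise in $y$. Feeding this into $\om$ and again using that exactly one of the three slots may carry a normal component, every term reduces to a $\nu$-contraction times the area element; a direct computation in the isothermal coordinates then identifies the four families of contributions: the $f^2\,\D\nu$-terms give $-\inner{s,\h\nu}\inner{s,H}$ (using $h_{xx}+h_{yy}=e^{2\la}\inner{H,\nu}$); the $A(u_x,\sigma),A(u_y,\sigma)$-terms give $-\inner{A(\sigma,\sigma),\h\nu}$ (recognising the $\sigma$-bilinear coefficient as $\inner{A(\sigma,\sigma),\nu}$); the $(\pd_\bullet f)\nu$-against-$\sigma$ terms give $-\h\,\nabla_\sigma f$; and the $\D^\top_\bullet\sigma$-against-$f\nu$ terms give $+\h f\,\textrm{div}_\Sigma\sigma$.

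It remains to reorganise the purely tangential leftovers. Since $\Db\nu=0$ we have $\Db_\sigma s=(\nabla_\sigma f)\nu$, hence $\inner{\Db_\sigma s,\h\nu}=\h\,\nabla_\sigma f$; moreover $\textrm{div}_\Sigma(\h f\sigma)$ integrates to zero over the closed $\Sigma$, i.e.\ $\int_\Si\big((\nabla_\sigma\h)f+\h\,\nabla_\sigma f+\h f\,\textrm{div}_\Sigma\sigma\big)\,\ed\Si=0$. Using the latter identity to eliminate $\int\h f\,\textrm{div}_\Sigma\sigma$, the three leftover integrals $-\int\h\,\nabla_\sigma f$, $+\int\h f\,\textrm{div}_\Sigma\sigma$ and $+\int(\nabla_\sigma\h)f$ collapse precisely to $-2\int_\Si\inner{\Db_\sigma s,\h\nu}\,\ed\Si$. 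Together with the terms collected above this gives the claimed formula.

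I expect the real work to be the coordinate bookkeeping in the second paragraph: keeping all signs and the orientation convention straight through the $\nu$-contractions and correctly matching the resulting scalars to $\inner{s,H}$, $\inner{A(\sigma,\sigma),\nu}$ and $\textrm{div}_\Sigma\sigma$. Two consistency checks guide the computation. Setting $\sigma=0$, the $(\pd_\bullet f)\nu$-terms vanish outright against $\nu$ and no integration by parts is needed, so the formula must reduce to $\ed^2 V_\al(u)[s,s]=\int_\Si\big(-\inner{s,\h\nu}\inner{s,H}+\inner{s,\nu}\nabla_s\h\big)\,\ed\Si$. And for $\h=h$ constant, adding $\ed^2\A$ from \cref{thm:app1} must reproduce, on normal variations $s=f\nu$, the CMC Jacobi operator $L=-\Dl-(|A|^2+Ric_N(\nu,\nu))$ of \eqref{eq:normvar}.
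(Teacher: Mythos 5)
Your proposal is correct and follows essentially the same route as the paper's proof: both start from the Da Lio--Gianocca--Rivi\`ere second variation formula with $\ed\al=\h\,\ed V_N$, reduce every $\om$-contraction to its $\nu$-component, identify the $\inner{s,H}$, $\inner{A(\sigma,\sigma),\nu}$ and $\textrm{div}_\Sigma\sigma$ pieces, and perform the single integration by parts on $\textrm{div}_\Si(\h f\sigma)$ that produces the coefficient $-2$ on $\inner{\Db_\sigma s,\h\nu}$. The only (cosmetic) difference is that you expand $\D_{\de_x}v$, $\D_{\de_y}v$ via Gauss--Weingarten before contracting, whereas the paper projects the slots of $\om(v,v_x,u_y)$, $\om(v,u_x,v_y)$ directly; the bookkeeping and signs in your four families all check out.
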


\begin{corollary}
\label{cor:a3}
If $u:\Si\to N$ is a CMC $h$ immersion, i.e. $V_\al = V_h$ and $\h\equiv h$, then the mean curvature of $u$ is $H=h\nu$ and 
$$ \secondvar{V_h}{u}{v,v} = \int_\Si -\inner{s,H}^2 - 2\inner{\nabla^\perp_\sigma s,H} - \inner{A(\sigma,\sigma),H} \id\Si. $$
\end{corollary}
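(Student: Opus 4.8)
The plan is to obtain this as an immediate specialisation of \cref{lem}, using only the constancy of $\h$ together with the first-variation characterisation of CMC $h$-immersions.

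First I would record why $H = h\nu$. Since $u$ is a CMC $h$-immersion it is a critical point of $\A_h = \A + V_h$ for the form $\al_h$ of \eqref{eq:alH}; combining the first variation of area, $\ed\A(u)[v] = -\int_\Si\inner{v, H}\id\Si$ from \cref{thm:app1}, with $\ed V_h(u)[v] = \int_\Si\inner{v, h\nu}\id\Si$ from the enclosed-volume computation above, and letting $v\in\Gamma(u^\ast \T N)$ range over all variations, forces $H = h\nu$ pointwise on $\Si$. In particular $H$ is normal, and since $N$ is three-dimensional the normal bundle $\mathcal{V}\Si$ is the real line bundle spanned by $\nu$; hence every $\mathcal{V}\Si$-valued field is a function multiple of $\nu$, so for $s\in\Gamma(\mathcal{V}\Si)$ we get $\inner{s,h\nu} = \inner{s,H}$, and likewise $\inner{A(\sigma,\sigma),h\nu} = \inner{A(\sigma,\sigma),H}$ and $\inner{\Db_\sigma s, h\nu} = \inner{\Db_\sigma s, H}$, since $A(\sigma,\sigma)$ and $\Db_\sigma s$ take values in $\mathcal{V}\Si$.

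Then I would substitute $\h\equiv h$ into the identity of \cref{lem}. The last term $\inner{s,\nu}\nabla_s\h$ vanishes identically because $\h$ is constant, and the three remaining terms become, after the substitutions just noted,
\[
  \secondvar{V_h}{u}{v,v} = \int_\Si -\inner{s,H}^2 - 2\inner{\Db_\sigma s,H} - \inner{A(\sigma,\sigma),H}\id\Si,
\]
which is the claimed formula once we recall that $\Db = \nabla^\perp$.

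I do not expect any real obstacle: the substantive content --- the second variation of $V_\al$ --- is already contained in \cref{lem}, and the first-variation characterisation of CMC immersions is standard and recorded above. The only point deserving a word of care is the identification $\inner{w,h\nu} = \inner{w,H}$ for $\mathcal{V}\Si$-valued $w$, which is precisely where the hypothesis that the ambient manifold is three-dimensional enters, since it is what makes the normal bundle have rank one.
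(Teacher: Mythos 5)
Your proposal is correct and follows the same route as the paper: the corollary is an immediate specialisation of Lemma \ref{lem}, obtained by setting $\h\equiv h$ (so $\nabla_s\h=0$) and identifying $h\nu=H$ via the first-variation characterisation of a CMC $h$-immersion recorded in the appendix. Your extra remark on the rank-one normal bundle is harmless but not needed, since each pairing in the lemma is already against $\h\nu=H$ directly.
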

\begin{remark}
Combining this with Theorem \ref{thm:app1} we have, if $u$ is a CMC $h$-immersion,
\begin{equation}\label{eqn:second-var-area-h}
\secondvar{\A_h}{u}{v,v} = \int_\Si \abs{\nabla^\perp s}^2 - \rbrak{\abs{A}^2 + Ric(\nu,\nu)}\abs{s}^2 \id\Si.
\end{equation}	
\end{remark}
\begin{proof}[Proof of Lemma \ref{lem}]
Setting for convenience $\om=\ed V_N$ so that $\dform\al=\h\om$, we have from \cite{dalio-gianocca-riviere-2023} that 
$$ \secondvar{V_\al}{u}{v,v} = \int_\Si \h\om(v,v_x,u_y) + \h\om(v,u_x,v_y) + \nabla_v\h\,\om(v,u_x,u_y) \id x \ed y. $$
Let $\nu$ be a normal vector field such that $\om(\nu,u_x,u_u) = e^{2\la}$.
Then we have that $\om(v,u_x,u_y) = \om(s,u_x,u_y) = \inner{s,\nu}\om(\nu,u_x,u_y)=\inner{s,\nu}e^{2\la}$, so that the third term above is
$$ \nabla_v\h\,\om(v,u_x,u_y) = \inner{s,\nu}e^{2\la}\nabla_v\h. $$

Now, the first term in the above integral is
\begin{align}\label{eqn:second-var-vol-lemma-1}
\h\om(v,v_x,u_y) &= \h\om(\inner{v,\nu}\nu, v_x,u_y) + \h\om(\sigma,v_x,u_y) \nonumber\\
	&= \h\inner{s,\nu}\om(\nu,\inner{v_x,u_x}e^{-2\la}u_x,u_y) + \h\om(\sigma,\inner{v_x,u_x}e^{-2\la}u_x,u_y) \nonumber\\
	&= \h\inner{s,\nu}\inner{v_x,u_x} - \h\inner{v_x,\nu}\om(\nu,\sigma,u_y) \nonumber\\
	&= \h\inner{s,\nu}\inner{v_x,u_x} - \h\inner{v_x,\nu}\om(\nu,e^{-2\la}\inner{\sigma,u_x}u_x,u_y)\nonumber \\
	&= \h\inner{s,\nu}\inner{v_x,u_x} - \h\inner{v_x,\nu}\inner{\sigma,u_x}.
\end{align}
Similarly, the second term is
\begin{equation}\label{eqn:second-var-vol-lemma-2}
\h\om(v,u_x,v_y) = \h\inner{s,\nu}\inner{v_y,u_y} - \h\inner{v_y,\nu}\inner{\sigma,u_y}.
\end{equation}
Note now that
\begin{align*}
\textrm{div}_\Si (v) &= \sum_i\inner{\nabla_{e_i} v,e_i} \\
	&= e^{-2\la}\inner{\nabla_{u_x}v,u_x} + e^{-2\la}\inner{\nabla_{u_y} v, u_y} \\
	&= e^{-2\la}\rbrak{\inner{v_x,u_x} + \inner{v_y,u_y}}.
\end{align*}
Thus combining \eqref{eqn:second-var-vol-lemma-1} \eqref{eqn:second-var-vol-lemma-2}, one obtains
\begin{align}\label{eqn:second-var-vol-lemma-3}
\h\om(v,v_x,u_y) + \h\om(v,u_x,v_y) &= \h\inner{s,\nu}e^{2\la}\textrm{div}_\Si(v) - \h\inner{v_x,\nu}\inner{\sigma,u_x} - \h\inner{v_y,\nu}\inner{\sigma,u_y} \nonumber\\
	&= -\inner{s,\h\nu}e^{2\la}\textrm{div}_\Si(\sigma) - \h\inner{s,\nu}H\inner{s,\nu} \nonumber\\
	&\qquad - \h\inner{v_x,\nu}\inner{\sigma,u_x} - \h\inner{v_y,\nu}\inner{\sigma,u_y}
\end{align}
where $H$ is the mean curvature along the immersion $u:\Si\to N$.
We focus now on the last two terms; indeed, we have
\begin{align*}
-\h\inner{v_x,\nu}\inner{\sigma,u_x} - \h\inner{v_y,\nu}\inner{\sigma,u_y} &= -\h\inner{\sigma_x,\nu}\inner{\sigma,u_x} - \h\inner{s_x,\nu}\inner{\sigma,u_x} \\
	&\quad - \h\inner{\sigma_y,\nu}\inner{\sigma,u_y} - \h\inner{s_y,\nu}\inner{\sigma,u_y}. \\
\end{align*}
The first and third terms combine to give $-e^{2\la}\h\inner{\nabla_\sigma\sigma,\nu}$, and the second and fourth terms combine to give $-e^{2\la}\h\inner{\nabla^\perp_\sigma s,\nu}$; thus we have
\begin{equation}\label{eqn:second-var-vol-lemma-4}
-\h\inner{v_x,\nu}\inner{\sigma,u_x} - \h\inner{v_y,\nu}\inner{\sigma,u_y} = -e^{2\la}\inner{A(\sigma,\sigma),\h\nu} - e^{2\la}\inner{\nabla^\perp_\sigma s,\h\nu}.
\end{equation}
Combining \eqref{eqn:second-var-vol-lemma-3} and \eqref{eqn:second-var-vol-lemma-4} we have
\begin{multline}
\secondvar{V_\al}{u}{v,v} = \int_\Si\h\inner{s,\nu}\textrm{div}_\Si(\sigma) - \inner{s,\h\nu}\inner{s,H\nu} - \inner{A(\sigma,\sigma),\h\nu} \\
	- \inner{\nabla^\perp_\sigma s,\h\nu} + \nabla_\sigma\h \inner{s,\nu} + \nabla_s\h\inner{s,\nu} \id\Si. \nn
\end{multline}
Now, note that $\h\inner{s,\nu}\textrm{div}_\Si(\sigma) = \textrm{div}_\Si(\inner{s,\h\nu}\sigma) - \inner{\nabla^\perp_\sigma s,\h\nu} - \inner{s,\rbrak{\nabla_\sigma\h}\nu}$, and so
\begin{multline}
\secondvar{V_\al}{u}{v,v} = \int_\Si -\inner{s,\h\nu}\inner{s,H\nu} - 2\inner{\nabla^\perp_\sigma s,\h\nu} - \inner{A(\sigma,\sigma),\h\nu} + \inner{s,\nu}\nabla_s\h \dform\Si . \nn
\end{multline}
\end{proof}

%----------------BIBLIOGRAPHY-------------------------------------------

%\bibliography{mathrefs.bib}
\bibliographystyle{plain}

\vspace{.5cm}

\begin{flushleft}

L. Seemungal: \textit{l.seemungal@leeds.ac.uk} 

B. Sharp: \textit{b.g.sharp@leeds.ac.uk}  

School of Mathematics, University of Leeds, Leeds LS2 9JT, UK  
\end{flushleft}

\end{document}